\newcommand{\noun}[1]{\textsc{#1}}
\providecommand{\algorithmname}{Algorithm}
\numberwithin{equation}{section}
\theoremstyle{plain}
\newtheorem{thm}{\protect\theoremname}
  \theoremstyle{definition}
  \newtheorem{defn}[thm]{\protect\definitionname}
  \theoremstyle{plain}
  \newtheorem{prop}[thm]{\protect\propositionname}
  \theoremstyle{plain}
  \newtheorem{conjecture}[thm]{\protect\conjecturename}
  \theoremstyle{plain}
  \newtheorem{lem}[thm]{\protect\lemmaname}
  \theoremstyle{remark}
  \newtheorem{rem}[thm]{\protect\remarkname}
  \theoremstyle{definition}
  \newtheorem{problem}[thm]{\protect\problemname}
\DeclareMathOperator{\diag}{diag}
  \providecommand{\conjecturename}{Conjecture}
  \providecommand{\definitionname}{Definition}
  \providecommand{\lemmaname}{Lemma}
  \providecommand{\problemname}{Problem}
  \providecommand{\propositionname}{Proposition}
  \providecommand{\remarkname}{Remark}
\providecommand{\theoremname}{Theorem}
\begin{document}

\title{Complete Algebraic Reconstruction of Piecewise-Smooth Functions from
Fourier Data}

\author{Dmitry Batenkov}

\address{Department of Mathematics\\
Weizmann Institute of Science\\
Rehovot 76100\\
Israel}

\email{dima.batenkov@weizmann.ac.il}

\urladdr{http://www.wisdom.weizmann.ac.il/~dmitryb}

\keywords{Fourier inversion, nonlinear approximation, piecewise-smooth functions,
Eckhoff's conjecture, Eckhoff's method, Gibbs phenomenon}

\subjclass[2000]{Primary: 65T40; Secondary: 65D15}

\thanks{This research has been supported by the Adams Fellowship Program
of the Israel Academy of Sciences and Humanities.}

\date{\today}

\maketitle
\global\long\def\np{\ensuremath{K}}
\global\long\def\jp{\ensuremath{\xi}}
\global\long\def\w{\ensuremath{\omega}}
\global\long\def\ord{\ensuremath{d}}
\global\long\def\jc{\ensuremath{a}}
\newcommandx\fc[1][usedefault, addprefix=\global, 1=k]{\ensuremath{c_{#1}}}
\global\long\def\sc{\ensuremath{M}}
\global\long\def\fun{\ensuremath{f}}
\newcommandx\er[1][usedefault, addprefix=\global, 1=k]{\delta_{#1}}
\newcommandx\meas[1][usedefault, addprefix=\global, 1=k]{m_{#1}}
\newcommandx\apprmeas[1][usedefault, addprefix=\global, 1=k]{\widetilde{m}_{#1}}
\newcommandx\frsum[2][usedefault, addprefix=\global, 1=\fun, 2=\sc]{\mathfrak{F}_{#2}\left(#1\right)}
\global\long\def\smooth{\ensuremath{\Psi}}
\global\long\def\sing{\ensuremath{\Phi}}
\global\long\def\nn#1{\widetilde{#1}}
\global\long\def\e{\varepsilon}

\begin{abstract}
In this paper we provide a reconstruction algorithm for piecewise-smooth
functions with a-priori known smoothness and number of discontinuities,
from their Fourier coefficients, posessing the maximal possible asymptotic
rate of convergence -- including the positions of the discontinuities
and the pointwise values of the function. This algorithm is a modification
of our earlier method, which is in turn based on the algebraic method
of K.Eckhoff proposed in the 1990s. The key ingredient of the new
algorithm is to use a different set of Eckhoff's equations for reconstructing
the location of each discontinuity. Instead of consecutive Fourier
samples, we propose to use a ``decimated'' set which is evenly spread
throughout the spectrum.
\end{abstract}

\section{Introduction}

Consider the problem of reconstructing a function $\fun:\left[-\pi,\pi\right]\to\reals$
from a finite number of its Fourier coefficients
\[
\fc(\fun)\isdef\frac{1}{2\pi}\int_{-\pi}^{\pi}\fun(t)\ee^{-\imath kt}\dd t,\qquad k=0,1,\dots\sc.
\]

It is well-known that for periodic smooth functions, the truncated
Fourier series
\[
\frsum\isdef\sum_{|k|=0}^{\sc}\fc(\fun)\ee^{\imath kx}
\]
converges to $\fun$ very fast, subsequently making Fourier analysis
attractive for many applications. The precise dependence of the rate
of convergence on structural properties of $\fun$ is extensively
investigated in classical harmonic analysis and approximation theory
(see e.g. \cite{zygmund1959trigonometric}). In applications, it is
often sufficient to consider the number of continuous derivatives
of the function.
\begin{defn}
\label{def:cd}Let $C^{\ord+1}$ denote the class of continuous functions
having $\ord$ continuous derivatives, such that in addition $\der{\fun}{\ord+1}$
is piecewise-continuous and piecewise-differentiable.
\end{defn}
Applying integration by parts and the Riemann-Lebesgue lemma one has
immediately the following fact (see e.g. \cite[Section 3]{gottlieb1977numerical}).
\begin{prop}
For any $\fun\in C^{\ord+1}$ which is periodic (including its first
$\ord$ derivatives), we have $\left|\fc\left(\fun\right)\right|=O\left(\left|k\right|^{-\ord-2}\right)$,
while the approximation error is of the order
\begin{equation}
\left|\fun\left(x\right)-\frsum[\fun]\left(x\right)\right|=O\left(\sc^{-\ord-1}\right),\label{eq:best-approximation-smooth}
\end{equation}
and this holds uniformly in $\left[-\pi,\pi\right].$ 
\end{prop}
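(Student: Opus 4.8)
The plan is to establish both decay estimates via repeated integration by parts, which is the standard tool for relating smoothness to Fourier decay. Because $\fun$ together with its first $\ord$ derivatives is continuous and periodic, the boundary terms that normally obstruct integration by parts will vanish at each of the first $\ord+1$ stages. I would integrate by parts $\ord+2$ times in the defining integral for $\fc(\fun)$, each time picking up a factor of $(\imath k)^{-1}$ and differentiating $\fun$; after $\ord+1$ steps all boundary contributions cancel by periodicity, leaving
\[
\fc(\fun)=\frac{1}{2\pi\left(\imath k\right)^{\ord+1}}\int_{-\pi}^{\pi}\der{\fun}{\ord+1}(t)\,\ee^{-\imath kt}\dd t.
\]
At this point $\der{\fun}{\ord+1}$ is only piecewise-continuous, so I would integrate by parts one final time on each smooth piece. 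The jumps contribute boundary terms of size $O(1)$ and the surviving integral involves $\der{\fun}{\ord+2}$, which is bounded since $\der{\fun}{\ord+1}$ is piecewise-differentiable; together these give $\left|\fc(\fun)\right|=O\left(\left|k\right|^{-\ord-2}\right)$, the first assertion.

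For the approximation-error bound \eqref{eq:best-approximation-smooth}, I would bound the tail of the Fourier series directly using the coefficient decay just obtained. Writing the error as $\fun(x)-\frsum[\fun](x)=\sum_{|k|>\sc}\fc(\fun)\ee^{\imath kx}$ and taking absolute values gives a bound by $\sum_{|k|>\sc}\left|\fc(\fun)\right|$, which by the first part is dominated by a constant times $\sum_{|k|>\sc}|k|^{-\ord-2}$. This tail sum I would estimate by comparison with the integral $\int_{\sc}^{\infty}x^{-\ord-2}\dd x=O\left(\sc^{-\ord-1}\right)$, yielding the claimed rate uniformly in $x$ since the bound is independent of $x$.

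The main subtlety, and the step I would be most careful about, is the final integration by parts across the discontinuities of $\der{\fun}{\ord+1}$: one must split $[-\pi,\pi]$ at the jump points, track the boundary terms at each jump, and verify they contribute at the correct order rather than spoiling the estimate. The periodicity-driven cancellation of the \emph{earlier} boundary terms is routine, but here the jumps are genuinely present, so the argument relies on there being finitely many of them with bounded jump magnitudes — which is exactly what Definition \ref{def:cd} guarantees. A secondary point worth checking is that the pointwise convergence of the Fourier series is legitimate to invoke for functions in this class, so that the tail representation of the error is valid; this follows from the absolute summability of the coefficients implied by the $O\left(|k|^{-\ord-2}\right)$ decay with $\ord\ge 0$.
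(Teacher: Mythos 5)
Your proof is correct and follows essentially the same route as the paper, which establishes this proposition by exactly the argument you give — repeated integration by parts (with the periodicity hypotheses killing the boundary terms, and a final piecewise integration by parts across the jumps of $\fun^{\left(\ord+1\right)}$, where the Riemann--Lebesgue lemma controls the residual integral), followed by the integral-comparison bound on the Fourier tail; the paper simply cites this as a standard fact from \cite{gottlieb1977numerical} rather than writing it out. Your attention to the validity of the tail representation via absolute summability is a welcome extra precision, not a deviation.
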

Yet many realistic phenomena exhibit discontinuities, in which case
the unknown function $\fun$ is only piecewise-smooth. As a result,
the trigonometric polynomial $\frsum$ no longer provides a good approximation
to $\fun$ due to the slow convergence of the Fourier series (one
of the manifestations of this fact is commonly known as the ``Gibbs
phenomenon''). It has very serious implications, for example when
using spectral methods to calculate solutions of PDEs with shocks
\cite{gottlieb1977numerical}.
\begin{defn}
\label{def:pcdk}Let $PC\left(\ord+1,\np\right)$ denote the class
of piecewise-smooth functions $\fun$ with $\np$ points of discontinuity
of the first kind, such that the restriction of $\fun$ on each continuity
interval is in $C^{\ord+1}$ (as in \prettyref{def:cd}).
\end{defn}
An important question arises: \emph{``Can such piecewise-smooth functions
be reconstructed from their Fourier measurements, with accuracy which
is comparable to the 'classical' one such as \eqref{eq:best-approximation-smooth}''?}

This problem has received much attention, especially in the last few
decades (\cite{banerjee1998exponentially,barkhudaryan2007asymptotic,marchbarone:2000,bauer-band,beckermann2008rgp,Boyd20091404,brezinski2004extrapolation,driscoll2001pba,eckhoff1995arf,eckhoff1998high,engelberg2008recovery,gelb1999detection,gottlieb1997gpa,guilpin2004eps,hrycak2010pseudospectral,kvernadze2004ajd,kvernadze2010approximation,mhaskar2000polynomial,Poghosyan2010,poghosyan2011auto,shizgal2003towards,solomonoff1995reconstruction,tadmor2007filters,wei2007detection}
would be only a partial list). It has long been known that the key
problem for Fourier series acceleration is the detection of the shock
locations. While efficient methods for edge detection exist (e.g.
concentration kernels of Tadmor et.al. \cite{engelberg2008recovery,gelb1999detection,tadmor2007filters}),
the theoretical analysis of these methods suggests that they provide
not more than first order accuracy. In contrast, our main interest
in this paper is to investigate achievability of the maximal theoretically
possible rate of convergence. Applying elementary considerations we
have the following fact (see proof in \prettyref{app:max-accuracy}).
\begin{prop}
\label{prop:maximal-accuracy}Let $\fun\in PC\left(\ord+1,\np\right)$.
Then no deterministic algorithm can restore the locations of the discontinuities
from $\left\{ \fc\left(\fun\right)\right\} _{\left|k\right|\leqslant\sc}$
with accuracy which is asymptotically higher than $\sc^{-\ord-2}$.
\end{prop}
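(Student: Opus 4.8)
The plan is to prove this impossibility by an indistinguishability argument. I will exhibit, for each $\sc$, a pair of functions $\fun_{1},\fun_{2}\in PC(\ord+1,\np)$ whose Fourier coefficients agree for \emph{all} $\left|k\right|\leqslant\sc$, yet whose sets of discontinuities are separated by a distance of order $\sc^{-\ord-2}$. Since a deterministic algorithm operates only on the data $\left\{ \fc(\cdot)\right\} _{\left|k\right|\leqslant\sc}$, it must return identical outputs on $\fun_{1}$ and $\fun_{2}$; consequently its localization error is at least half the separation on at least one of the two inputs, which forces the stated bound. Letting $\fun_{1}$ be fixed and $\fun_{2}$ range over such perturbations then shows that no algorithm can achieve accuracy asymptotically better than $\sc^{-\ord-2}$.

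To build the pair, fix a reference $\fun_{1}=\fun\in PC(\ord+1,\np)$ possessing at least one genuine jump in value, so that (by a single integration by parts) $\left|\fc(\fun)\right|=O\left(\left|k\right|^{-1}\right)$. For a shift parameter $\e>0$ define $\fun_{2}(x)\isdef\fun(x-\e)+\rho(x)$, where $\rho$ is the degree-$\sc$ trigonometric polynomial $\rho(x)\isdef\sum_{\left|k\right|\leqslant\sc}\fc(\fun)\left(1-\ee^{-\imath k\e}\right)\ee^{\imath kx}$. Translation displaces each of the $\np$ discontinuities of $\fun$ by exactly $\e$, while $\rho$ is $C^{\infty}$ and periodic together with all its derivatives; hence $\fun_{2}\in PC(\ord+1,\np)$ with its discontinuity set shifted by $\e$ relative to that of $\fun_{1}$. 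Computing the coefficients, for $\left|k\right|\leqslant\sc$ one gets $\fc(\fun_{2})=\fc(\fun)\ee^{-\imath k\e}+\fc(\fun)\left(1-\ee^{-\imath k\e}\right)=\fc(\fun)=\fc(\fun_{1})$, so the two functions are Fourier-indistinguishable through index $\sc$ by design.

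The crux is then a norm estimate: $\e$ is exactly the separation I wish to maximize, subject to $\fun_{2}$ remaining in the uniformly bounded class, i.e. subject to the top derivative $\rho^{(\ord+1)}$ staying $O(1)$. Using $\left|1-\ee^{-\imath k\e}\right|\leqslant\left|k\right|\e$ together with $\left|\fc(\fun)\right|=O(\left|k\right|^{-1})$, the $k$-th coefficient of $\rho^{(\ord+1)}$ has modulus $\lesssim\left|k\right|^{-1}\cdot\left|k\right|\e\cdot\left|k\right|^{\ord+1}=\e\left|k\right|^{\ord+1}$, whence $\bigl\Vert\rho^{(\ord+1)}\bigr\Vert_{\infty}\lesssim\e\sum_{\left|k\right|\leqslant\sc}\left|k\right|^{\ord+1}\asymp\e\,\sc^{\ord+2}$. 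Choosing $\e=c\,\sc^{-\ord-2}$ thus keeps $\rho^{(\ord+1)}$ bounded, so $\fun_{2}$ re-enters the class with its norm enlarged only by a fixed constant, while its discontinuities are displaced by $\asymp\sc^{-\ord-2}$; combined with the reduction above this completes the proof. I expect the main obstacle to be precisely this norm estimate: the exponent $-\ord-2$ arises from balancing the $\left|k\right|^{-1}$ decay of a value-jump against the $\ord+1$ admissible derivatives, and the argument only bites because the class carries a uniform bound on $\der{\fun}{\ord+1}$. Making this quantitative — so that $\fun_{2}$ provably stays in the class for $\e\lesssim\sc^{-\ord-2}$ but would be expelled for $\e\gg\sc^{-\ord-2}$ — is the delicate step, and it is where the precise decay of $\fc(\fun)$ and the exact definition of the admissible class must be used with care.
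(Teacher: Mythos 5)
Your proposal is correct and is essentially the paper's own argument: the paper likewise shifts all jump locations by $\delta\asymp\sc^{-\ord-2}$ and adds the degree-$\sc$ trigonometric polynomial that cancels the discrepancy in the first $\sc$ Fourier coefficients, then verifies membership in the admissible class via $\left|1-\ee^{\imath k\delta}\right|\leqslant k\delta$ together with the $O\left(k^{-1}\right)$ size of the jump-generated coefficients — exactly your balance $\delta\cdot\sc^{\ord+2}=O(1)$. The only cosmetic difference is the bookkeeping of the quantitative class: the paper checks the correction coefficientwise against the decay condition $\left|\fc\left(\smooth\right)\right|\leqslant R\cdot k^{-\ord-2}$ on the smooth part (working inside an explicit subset $B\left(A,R\right)$), whereas you sum to bound $\bigl\Vert\rho^{(\ord+1)}\bigr\Vert_{\infty}$; both yield the same exponent, and the sharpness ("expelled for $\e\gg\sc^{-\ord-2}$") you worry about is not needed for the impossibility statement.
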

Until now, the question of whether this maximal accuracy is achievable
remained open. During the 1990's, a certain method has been put forward
by K.Eckhoff in a series of papers \cite{eckhoff1993,eckhoff1995arf,eckhoff1998high},
which he conjectured to provide such accuracy (see \prettyref{sec:previous}).
Thus we have the following \emph{``Eckhoff's conjecture''.}
\begin{conjecture}[Eckhoff's conjecture]
\label{conj:eckhoff} The jump locations of a piecewise-smooth function
$\fun\in PC\left(\ord+1,\np\right)$ can be reconstructed from its
first $2\sc+1$ Fourier coefficients, with accuracy $O\left(\sc^{-\ord-2}\right)$,
by solving the perturbed nonlinear system of algebraic equations \eqref{eq:eckhoff-system}.
\end{conjecture}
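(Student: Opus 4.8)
The plan is to recast Eckhoff's nonlinear system as a \emph{decimated}, polynomial-amplitude Prony system for the nodes $z_{j}=\ee^{-\imath\jp_{j}}$, and then to track how the smooth remainder perturbs its solution. First I would use the standard decomposition $\fun=\sing+\smooth$, in which $\smooth$ is periodic and of class $C^{\ord+1}$ and $\sing=\sum_{j=1}^{\np}\sum_{\ell=0}^{\ord}\jc_{j,\ell}\,V_{\ell}(\,\cdot\,-\jp_{j})$ is assembled from the Bernoulli-type kernels $V_{\ell}$ whose Fourier coefficients are exactly $(\imath k)^{-\ell-1}$. This yields the exact identity
\[
\fc(\fun)=\sum_{j=1}^{\np}\ee^{-\imath k\jp_{j}}\sum_{\ell=0}^{\ord}\frac{\jc_{j,\ell}}{(\imath k)^{\ell+1}}+\fc(\smooth),\qquad\fc(\smooth)=O\!\left(k^{-\ord-2}\right),
\]
where the decay of the remainder is exactly the Proposition on smooth functions applied to $\smooth$. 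Multiplying through by $(\imath k)^{\ord+1}$ converts the singular part into $\sum_{j}P_{j}(k)\,z_{j}^{k}$ with $P_{j}$ a polynomial of degree $\ord$, while the remainder becomes $O(k^{-1})$; this is precisely the confluent (polynomial-amplitude) Prony structure that underlies Eckhoff's equations.

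The decisive step is decimation. Fixing a step $p$ and a base index $k_{0}$, I would restrict to the arithmetic progression $k=k_{0}+mp$ of frequencies lying near the top of the available band $[0,2\sc]$. Writing $z_{j}^{k}=z_{j}^{k_{0}}w_{j}^{m}$ with $w_{j}\isdef z_{j}^{p}=\ee^{-\imath p\jp_{j}}$ turns the data into a confluent Prony system in the \emph{new} nodes $w_{j}$,
\[
(\imath k)^{\ord+1}\,\fc[k](\fun)\,\Big|_{k=k_{0}+mp}=\sum_{j=1}^{\np}Q_{j}(m)\,w_{j}^{m}+\eta_{m},
\]
where each $Q_{j}$ has degree $\ord$ and $\eta_{m}=O(\sc^{-1})$. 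The gain is one of sensitivity: since $\left|\dd w_{j}/\dd\jp_{j}\right|=p$, an error $\e$ in the recovered node $w_{j}$ contributes only $\e/p$ to the error in $\jp_{j}$. I would then invoke the sharp perturbation theory of confluent Prony systems, whose key feature is that the node is recovered with accuracy governed by the \emph{leading} amplitude $\sim\sc^{\ord}$ rather than the trailing one. Here the relevant relative perturbation is $\eta_{m}/\sc^{\ord}=O(\sc^{-1})/O(\sc^{\ord})=O(\sc^{-\ord-1})$, so $|w_{j}-\widehat{w}_{j}|=O(\sc^{-\ord-1})$, and hence $|\jp_{j}-\widehat{\jp}_{j}|=O\!\left(\sc^{-\ord-1}/p\right)$.

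Taking $p$ as large as the band permits --- $p\sim\sc/\bigl(\np(\ord+1)\bigr)$, so that the $\sim2\np(\ord+1)$ decimated samples needed to resolve the confluent system still fit inside $[0,2\sc]$ --- produces the target estimate $|\jp_{j}-\widehat{\jp}_{j}|=O(\sc^{-\ord-2})$, matching the lower bound of \prettyref{prop:maximal-accuracy}. The genuine difficulty, and the step I expect to be the main obstacle, is twofold. First is \emph{aliasing}: a large $p$ only determines $p\jp_{j}$ modulo $2\pi$, so extracting $\jp_{j}$ itself requires a preliminary estimate accurate to within $\pi/p\sim\sc^{-1}$, and it requires the decimated nodes $w_{j}=\ee^{-\imath p\jp_{j}}$ to remain well separated on the unit circle. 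I would resolve this by a coarse-to-fine hierarchy: begin with $p=O(1)$ (essentially the consecutive-sample system) to localize each $\jp_{j}$ crudely, then repeatedly enlarge $p$, at each stage using the previous estimate both to unwrap the phase and to certify node separation, absorbing the $\np,\ord$-dependent constants into the final rate. Second, and more delicate, is making the perturbation bound uniform across the stages --- in particular proving that the node error scales against the leading amplitude $\sc^{\ord}$ (the favorable conditioning), with the condition number of the confluent solver controlled as $p$ grows and the $w_{j}$ cluster. This conditioning estimate is the technical heart of the argument.
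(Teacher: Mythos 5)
Your reduction to a polynomial-amplitude Prony system, your rate accounting (noise $O(\sc^{-1})$ after multiplying by $(\imath k)^{\ord+1}$, node error divided by the decimation step), and your identification of the two obstacles all match the mechanism of the paper, but the proposal has two genuine gaps. The first is that the step you yourself call the technical heart --- that the decimated node is recovered with error (noise)/(leading amplitude), uniformly and for \emph{finite} perturbations --- is precisely the theorem that must be proved; it cannot be invoked. The available perturbation theory for confluent Prony systems is a linearization (Jacobian) bound valid only in an unquantified neighborhood of the exact solution, and the paper itself points out in \prettyref{sec:optimality} that extending such bounds to finite-size neighborhoods is an open problem; it uses them only as a heuristic justification, never in the proof. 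What the paper actually proves, writing $N=\left\lfloor \sc/(\ord+2)\right\rfloor$, is the single-node version of your claim, by a completely explicit chain: it forms the finite-difference polynomial \eqref{eq:pnd-def} whose exact root is $z=\w^{N}$ (\prettyref{prop:pnd-exact-root-z}), establishes the structural decomposition \eqref{eq:p-decomposition} into the fixed polynomial family $s_{i}^{\ord}$, shows by a Rolle-type induction that $s_{\ord}^{\ord}$ is square-free with all roots in $[1,+\infty)$ (\prettyref{lem:simple-roots}) --- which is what forces $\left|\frac{\dd}{\dd u}p_{N}^{\ord}(u)\right|\gtrsim B^{*}N^{\ord}$ at every root --- and then applies Rouch\'e's theorem to the perturbed polynomial to get root error $O\left(\frac{R^{*}}{B^{*}}N^{-\ord-1}\right)$ (\prettyref{lem:accuracy-root}); the $N$-th root extraction supplies the final factor $N^{-1}$. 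Nothing in your outline substitutes for this chain, and your appeal to an external ``sharp perturbation theory'' is circular as a proof of the conjecture.

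The second gap is specific to your choice to keep all $\np$ jumps in one decimated system: the decimated nodes $\ee^{-\imath p\jp_{j}}$ need not remain separated. For the value $p\sim\sc/\left(\np(\ord+1)\right)$ forced by the band, the quantity $p\left(\jp_{i}-\jp_{j}\right)$ can be arbitrarily close to $0$ modulo $2\pi$ (exactly $0$ for suitable configurations, e.g. two jumps at distance $\pi$ and $p$ even), and then any Prony-type bound degenerates through the separation factor, as in the $\left(2/\delta_{\sigma}\right)^{R}$ term of \eqref{eq:poly-node-pert}. ``Certifying'' separation from a coarse estimate does not repair this: certification can only report the failure, which is intrinsic to the configuration and the chosen $p$; you would need to prove that a good $p$ of the right magnitude always exists, a simultaneous Diophantine avoidance statement over all $\binom{\np}{2}$ pairs that you do not address. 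The paper sidesteps both this issue and your aliasing hierarchy at once: before decimating, it localizes each jump by multiplying $\fun$ with a smooth bump function (step 2 of \prettyref{alg:half-algo}), so every decimated system has exactly one node, and the same preprocessing (the half-order method, \prettyref{thm:half-accuracy}) already delivers accuracy $O\left(\sc^{-\left\lfloor \ord/2\right\rfloor -2}\right)=o\left(N^{-1}\right)$, which is enough to unwrap the $N$-th root in a single stage rather than through a coarse-to-fine iteration whose constants you would still have to control uniformly.
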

In our previous work \cite{batyomAlgFourier} we have provided an
explicit reconstruction algorithm (\prettyref{alg:half-algo} \vpageref{alg:half-algo}),
based on original Eckhoff's method, which restored the jump locations
(and subsequently the pointwise values of the function between the
jumps) with ``half'' the maximal accuracy. In the present paper we
modify the method of \cite{batyomAlgFourier} (see \prettyref{alg:new-reconstruction-single-jump}
and \prettyref{alg:full-algo}) so that full asymptotic accuracy is
achieved (\prettyref{thm:full-accuracy-final}). The vital difference
of the new algorithm compared to the original Eckhoff's method (and
its modification from \cite{batyomAlgFourier}) is that when solving
the system \eqref{eq:eckhoff-system}, instead of consecutive Fourier
coefficients, we take ones that are evenly spaced throughout the whole
sampling range (thus we call the new method ``decimated Eckhoff's
algorithm'').

We describe the general approach, as well as our previous results
obtained in \cite{batyomAlgFourier}, in \prettyref{sec:previous}.
The modified algorithm is provided in \prettyref{sec:algorithm},
and its accuracy is analyzed in \prettyref{sec:accuracy}. Results
of some numerical simulations are presented in \prettyref{sec:numerics}.
We briefly discuss the optimality and some practical aspects of the
algebraic reconstruction algorithms in \prettyref{sec:optimality}.
Some possible extensions and generalizations are outlined in \prettyref{sec:extensions}.

\thanks{We would like to thank B.Adcock and Y.Yomdin for useful discussions.
We are also very grateful to the reviewer for many constructive suggestions.}

\section{\label{sec:previous}Eckhoff's method and half-order reconstruction}

Let us first briefly describe what has become known as the Eckhoff's
method (or Krylov-Gottlieb-Eckhoff method) for nonlinear Fourier reconstruction
of piecewise-smooth functions \cite{eckhoff1993,eckhoff1995arf,eckhoff1998high}.

Let $\fun\in PC\left(\ord+1,\np\right)$. Consequently, $\fun$ has
$\np>0$ jump discontinuities $\left\{ \jp_{j}\right\} _{j=1}^{\np}$
(they can be located also at $\pm\pi$, but not necessarily so). Furthermore,
in every segment $\left[\jp_{j-1},\jp_{j}\right]$ we have that $\fun\in C^{\ord+1}$.
Denote the associated jump magnitudes at $\jp_{j}$ by
\[
\jc_{\ell,j}\isdef\der{\fun}{\ell}(\jp_{j}^{+})-\der{\fun}{\ell}(\jp_{j}^{-}),\qquad\ell=0,1,\dots,\ord.
\]
We write the piecewise smooth $\fun$ as the sum $\fun=\smooth+\sing$,
where $\smooth\in C^{\ord+1}$ and $\sing(x)$ is a piecewise polynomial
of degree $\ord$, uniquely determined by $\left\{ \jp_{j}\right\} ,\left\{ \jc_{\ell,j}\right\} $
such that it ``absorbs'' all the discontinuities of $\fun$ and its
first $\ord$ derivatives. This idea is very old and goes back at
least to A.N.Krylov (\cite{barkhudaryan2007asymptotic,kantokryl62}).
Eckhoff derives the following explicit representation for $\sing(x)$:
\begin{equation}
\begin{split}\sing(x) & =\sum_{j=1}^{\np}\sum_{\ell=0}^{\ord}\jc_{\ell,j}V_{\ell}(x;\jp_{j})\\
V_{n}\left(x;\jp_{j}\right) & =-\frac{\left(2\pi\right)^{n}}{\left(n+1\right)!}B_{n+1}\left(\frac{x-\jp_{j}}{2\pi}\right)\qquad\jp_{j}\leq x\leq\jp_{j}+2\pi
\end{split}
\label{eq:sing-part-explicit-bernoulli}
\end{equation}
where $V_{n}\left(x;\jp_{j}\right)$ is understood to be periodically
extended to $\left[-\pi,\pi\right]$ and $B_{n}(x)$ is the $n$-th
Bernoulli polynomial. Elementary integration by parts gives the following
formula.
\begin{prop}
Let $\sing(x)$ be given by \eqref{eq:sing-part-explicit-bernoulli}.
Then
\begin{equation}
\fc(\sing)=\frac{1}{2\pi}\sum_{j=1}^{\np}\ee^{-\imath k\jp_{j}}\sum_{\ell=0}^{\ord}(\imath k)^{-\ell-1}\jc_{\ell,j}.\label{eq:singular-fourier-explicit}
\end{equation}

\end{prop}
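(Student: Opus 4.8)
The plan is to use the linearity of the Fourier-coefficient functional to reduce the claim to a single elementary integral. Since \eqref{eq:sing-part-explicit-bernoulli} expresses $\sing$ as the finite combination $\sing=\sum_{j=1}^{\np}\sum_{\ell=0}^{\ord}\jc_{\ell,j}V_{\ell}\left(\cdot;\jp_{j}\right)$ and $\fc$ is linear in its argument, it suffices to establish the single-block identity
\begin{equation}
\fc\bigl(V_{n}(\cdot;\jp)\bigr)=\frac{1}{2\pi}\,\ee^{-\imath k\jp}\,(\imath k)^{-n-1},\qquad k\neq0,\label{eq:block-goal}
\end{equation}
for each degree $n$ and each jump location $\jp$; multiplying \eqref{eq:block-goal} by $\jc_{\ell,j}$ and summing over $j$ and $\ell$ then yields \eqref{eq:singular-fourier-explicit} at once. (For $k=0$ the right-hand side is undefined, so the identity is understood for $k\neq0$, the constant Fourier coefficient being immaterial to the reconstruction.)

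First I would exploit periodicity. Both $V_{n}(\cdot;\jp)$ and $\ee^{-\imath kt}$ are $2\pi$-periodic, so the integral defining $\fc$ may be taken over the single period $\left[\jp,\jp+2\pi\right]$, on which the representation \eqref{eq:sing-part-explicit-bernoulli} for $V_{n}$ holds without any periodic wrapping. Substituting $u=(t-\jp)/(2\pi)$ normalises the Bernoulli argument to $\left[0,1\right]$ and factors out the phase, giving
\[
\fc\bigl(V_{n}(\cdot;\jp)\bigr)=-\,\ee^{-\imath k\jp}\,\frac{(2\pi)^{n}}{(n+1)!}\int_{0}^{1}B_{n+1}(u)\,\ee^{-2\pi\imath ku}\dd u.
\]
The whole computation thus rests on the normalised moments $I_{m}\isdef\int_{0}^{1}B_{m}(u)\,\ee^{-2\pi\imath ku}\dd u$.

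The core step is to evaluate $I_{m}$ by repeated integration by parts, using the two standard facts $B_{m}'=mB_{m-1}$ and $B_{m}(1)=B_{m}(0)$ for $m\geq2$. Each integration by parts produces a boundary term proportional to $B_{m}(1)\ee^{-2\pi\imath k}-B_{m}(0)=B_{m}(1)-B_{m}(0)$, where I have used that $k$ is an integer so $\ee^{-2\pi\imath k}=1$, together with the recursive term $\tfrac{m}{2\pi\imath k}I_{m-1}$. This is the one place that demands care: for $m\geq2$ the boundary term vanishes, so $I_{m}=\tfrac{m}{2\pi\imath k}I_{m-1}$, but at the base case $m=1$ one has $B_{1}(1)-B_{1}(0)=1$, which survives, while $I_{0}=\int_{0}^{1}\ee^{-2\pi\imath ku}\dd u=0$ for $k\neq0$. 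Hence $I_{1}=-1/(2\pi\imath k)$, and unwinding the recursion gives $I_{m}=-\,m!/(2\pi\imath k)^{m}$ for every $m\geq1$.

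Finally I would reassemble. Setting $m=n+1$ yields $\int_{0}^{1}B_{n+1}(u)\,\ee^{-2\pi\imath ku}\dd u=-(n+1)!/(2\pi\imath k)^{n+1}$, and substituting this above the factorials and the powers of $2\pi$ cancel, leaving exactly \eqref{eq:block-goal}; summing against the jump magnitudes then produces \eqref{eq:singular-fourier-explicit}. I do not expect a substantial obstacle here, since the result is genuinely the promised \emph{elementary integration by parts}; the only bookkeeping worth flagging is the asymmetry between the boundary terms that vanish for $m\geq2$ and the single surviving contribution at $m=1$, which is what ultimately supplies the entire answer.
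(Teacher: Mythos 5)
Your proposal is correct and takes essentially the same route the paper indicates: the paper's entire proof is the one-line remark that ``elementary integration by parts'' gives the formula, and your argument --- reduction by linearity and periodicity to the single-block moment $\int_{0}^{1}B_{m}(u)\,\ee^{-2\pi\imath ku}\,\dd u$, then the integration-by-parts recursion $I_{m}=\tfrac{m}{2\pi\imath k}I_{m-1}$ for $m\geqslant2$ with the sole surviving boundary term $B_{1}(1)-B_{1}(0)=1$ at the base case --- is a correct, detailed instantiation of exactly that computation, and the constants assemble to $\frac{1}{2\pi}\ee^{-\imath k\jp}(\imath k)^{-n-1}$ as claimed. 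The one caveat you rightly flag (the identity is read for $k\neq0$, since $(\imath k)^{-\ell-1}$ is undefined at $k=0$) is also implicit in the paper.
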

Eckhoff observed that if $\smooth$ is sufficiently smooth, then the
contribution of $\fc(\smooth)$ to $\fc(\fun)$ is negligible \textbf{for
large} $k$, and therefore one can hope to reconstruct the unknown
parameters $\left\{ \jp_{j},\jc_{\ell,j}\right\} $ from the perturbed
equations

\begin{equation}
\fc\left(\fun\right)=\frac{1}{2\pi}\sum_{j=1}^{\np}\ee^{-\imath k\jp_{j}}\sum_{\ell=0}^{\ord}(\imath k)^{-\ell-1}\jc_{\ell,j}+O\left(k^{-\ord-2}\right),\; k\gg1.\label{eq:eckhoff-system}
\end{equation}

His proposed method was to construct from the known values
\[
\left\{ \fc\left(\fun\right)\right\} \qquad k=\sc-\left(\ord+1\right)\np+1,\sc-\left(\ord+1\right)\np+2,\dots,\sc
\]
a system of algebraic equations satisfied by the jump points $\left\{ \jp_{1},\dots,\jp_{\np}\right\} $,
and solve this system numerically. Based on some explicit computations
for small values of $\ord,\np$ and large number of numerical experiments,
he conjectured that his method would reconstruct the jump locations
with accuracy $\sc^{-\ord-2}$ (\prettyref{conj:eckhoff}). 

In \cite{batyomAlgFourier} we proposed a reconstruction method based
on the original Eckhoff's procedure, outlined in \prettyref{alg:half-algo}
\vpageref{alg:half-algo}.

\begin{algorithm}[H]
\begin{raggedright}
Let $f\in PC\left(\ord+1,\np\right)$, and assume that $\fun=\sing^{\left(\ord\right)}+\smooth$
where $\sing^{\left(\ord\right)}$ is the piecewise polynomial absorbing
all discontinuities of $\fun$, and $\smooth\in C^{\ord+1}.$ Assume
in addition the following a-priori bounds:
\par\end{raggedright}
\begin{itemize}
\item Minimal separation distance between the jumps
\[
\min_{i\neq j}\left|\jp_{i}-\jp_{j}\right|\geqslant J>0;
\]

\item Upper bound on jump magnitudes
\[
\left|\jc_{l,j}\right|\leqslant A<\infty;
\]

\item Lower bound on the value of the lowest-order jump
\[
\left|\jc_{0,j}\right|\geqslant B>0;
\]

\item Upper bound on the size of the Fourier coefficients of $\smooth$:
\[
\left|\fc\left(\smooth\right)\right|\leqslant R\cdot k^{-\ord-2}.
\]

\end{itemize}
\begin{raggedright}
Let us be given the first $\sc\gg1$ Fourier coefficients of $\fun$
for $\sc>\sc\left(\ord,\np,J,A,B,R\right)$ (a quantity which is computable).
The reconstruction is as follows.
\par\end{raggedright}
\begin{enumerate}
\item Obtain first-order approximations to the jump locations $\left\{ \jp_{1},\dots,\jp_{\np}\right\} $
by Prony's method (Eckhoff's method of order 0).
\item Localize each discontinuity $\jp_{j}$ by calculating the first $\sc$
Fourier coefficients of the function $f_{j}=\fun\cdot h_{j}$ where
$h_{j}$ is a $C^{\infty}$ bump function satisfying

\begin{enumerate}
\item $h_{j}\equiv0$ on the complement of $\left[\jp_{j}-J,\jp_{j}+J\right]$;
\item $h_{j}\equiv1$ on $\left[\jp_{j}-\frac{J}{3},\jp_{j}+\frac{J}{3}\right]$.
\end{enumerate}
\item Fix the reconstruction order $\ord_{1}\leq\left\lfloor \frac{\ord}{2}\right\rfloor $.
For each $j=1,2,\dots,\np$, recover the parameters $\left\{ \jp_{j},\jc_{0,j},\dots,\jc_{\ord_{1},j}\right\} $
from the approximate system of $\ord_{1}+2$ equations
\begin{equation}
\fc\left(f_{j}\right)=\frac{1}{2\pi}\ee^{-\imath\jp_{j}k}\sum_{\ell=0}^{\ord_{1}}\frac{\jc_{\ell,j}}{\left(\imath k\right)^{\ell+1}}+\er,\qquad k=\sc-\ord_{1}-1,\sc-\ord_{1},\dots,\sc,\label{eq:half-algo-single-jump}
\end{equation}
by Eckhoff's method for one jump. The actual method is to solve a
single polynomial equation of degree $\ord_{1}$ constructed from
the measurements $\left\{ \fc\left(\fun_{j}\right)\right\} $, thus
recovering the unknown $\jp_{j}$, and subsequently solve a linear
system w.r.t. the rest of the parameters $\left\{ \jc_{0,j},\dots,\jc_{\ord_{1},j}\right\} $.
\item From the previous steps we obtained approximate values for the parameters
$\left\{ \nn{\jp_{j}}\right\} $ and $\left\{ \nn{\jc}_{\ell,j}\right\} $.
The final approximation is taken to be
\begin{equation}
\begin{split}\widetilde{\fun} & =\widetilde{\smooth}+\widetilde{\sing}=\sum_{\left|k\right|\leq\sc}\left\{ \fc(\fun)-\frac{1}{2\pi}\sum_{j=1}^{\np}\ee^{-\imath\nn{\jp_{j}}k}\sum_{\ell=0}^{d_{1}}\frac{\widetilde{\jc}_{\ell,j}}{(\imath k)^{\ell+1}}\right\} \ee^{\imath kx}+\sum_{j=1}^{\np}\sum_{\ell=0}^{d_{1}}\nn{\jc}_{\ell,j}V_{\ell}(x;\nn{\jp_{j}}).\end{split}
\label{eq:final-approximation}
\end{equation}

\end{enumerate}
\caption{Half-order algorithm, \cite{batyomAlgFourier}.}
\label{alg:half-algo}

\end{algorithm}

We have also shown that this method achieves the following accuracy.
\begin{thm}[\cite{batyomAlgFourier}]
\label{thm:half-accuracy}Let $\fun\in PC\left(\ord+1,\np\right)$
and let $\nn{\fun}$ be the approximation of order $\ord_{1}\leqslant\left\lfloor \frac{\ord}{2}\right\rfloor $
computed by \prettyref{alg:half-algo}. Then for large enough $\sc$
we have%
\footnote{The last (pointwise) bound holds on ``jump-free'' regions.%
}
\begin{equation}
\begin{split}\left|\nn{\jp_{j}}-\jp_{j}\right| & \leqslant C_{1}\left(\ord,\ord_{1},\np,J,A,B,R\right)\cdot\sc^{-\ord_{1}-2};\\
\left|\nn{\jc}_{\ell,j}-\jc_{\ell,j}\right| & \leqslant C_{2}\left(\ord,\ord_{1},\np,J,A,B,R\right)\cdot\sc^{\ell-\ord_{1}-1},\;\ell=0,1,\dots,\ord_{1};\\
\left|\nn{\fun}\left(x\right)-\fun\left(x\right)\right| & \leqslant C_{3}\left(\ord,\ord_{1},\np,J,A,B,R\right)\cdot\sc^{-\ord_{1}-1}.
\end{split}
\label{eq:half-estimates}
\end{equation}

\end{thm}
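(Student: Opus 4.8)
The plan is to execute \prettyref{alg:half-algo} stage by stage and track how the perturbation $\er$ in \eqref{eq:half-algo-single-jump} propagates into each recovered parameter, the whole argument being organized around the decoupled single-jump problems produced by the localization step. The order-$0$ Prony step yields crude approximations to the $\jp_{j}$ which, for $\sc$ large, are accurate to better than $\frac{J}{3}$ and therefore suffice to place the cutoffs $h_{j}$; since $h_{j}\equiv1$ near $\jp_{j}$ and is supported away from every other jump, the localized $f_{j}=\fun\cdot h_{j}$ lies in $PC\left(\ord+1,1\right)$ with its unique discontinuity at $\jp_{j}$ and with \emph{the same} jump magnitudes $\jc_{\ell,j}$. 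By \eqref{eq:singular-fourier-explicit} its coefficients therefore satisfy
\[
\fc\left(f_{j}\right)=\frac{1}{2\pi}\ee^{-\imath k\jp_{j}}\sum_{\ell=0}^{\ord}\frac{\jc_{\ell,j}}{\left(\imath k\right)^{\ell+1}}+O\left(k^{-\ord-2}\right),
\]
the remainder being the Fourier tail of the $C^{\ord+1}$ smooth part of $f_{j}$. Matching against the order-$\ord_{1}$ model \eqref{eq:half-algo-single-jump}, the effective perturbation is $\er=\frac{1}{2\pi}\ee^{-\imath k\jp_{j}}\sum_{\ell=\ord_{1}+1}^{\ord}\jc_{\ell,j}\left(\imath k\right)^{-\ell-1}+O\left(k^{-\ord-2}\right)=O\left(k^{-\ord_{1}-2}\right)$. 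The observation I would isolate first, and which drives everything, is that this perturbation is \emph{structured}: up to the negligible $O\left(k^{-\ord-2}\right)$ tail it is the pure exponential $\ee^{-\imath k\jp_{j}}$ times an amplitude that is smooth and slowly varying in $k$.

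For the jump location I would rescale the data by $\left(\imath k\right)^{\ord_{1}+1}$, turning the model into $\ee^{-\imath k\jp_{j}}$ times a polynomial in $k$ of degree $\ord_{1}$ with leading coefficient $\jc_{0,j}$, perturbed now by a term of size $O\left(k^{-1}\right)$; Eckhoff's single-jump construction then recovers $\jp_{j}$ as a root of a polynomial equation of degree $\ord_{1}$ assembled from these quantities. A naive estimate would give only $O\left(\sc^{-1}\right)$, so the heart of the argument is to show that the structure of $\er$ forces a near-cancellation. Two features combine: the modeled leading term grows like $k^{\ord_{1}}$ while the perturbation is $O\left(k^{-1}\right)$, so the \emph{relative} perturbation is already $O\left(\sc^{-\ord_{1}-1}\right)$; and because the unmodeled terms are themselves $\ee^{-\imath k\jp_{j}}$ times powers of $1/k$, the divided-difference structure of the construction contributes one further order of cancellation --- for instance, when $\ord_{1}=0$ the ratio of two consecutive rescaled samples equals $z_{j}\bigl(1+O\left(k^{-2}\right)\bigr)$, where $z_{j}=\ee^{-\imath\jp_{j}}$. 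Together these yield $\bigl|\nn{\jp_{j}}-\jp_{j}\bigr|=O\left(\sc^{-\ord_{1}-2}\right)$. Making the cancellation quantitative requires a stable expansion of the root in terms of the coefficient perturbations, and the a-priori bound $\bigl|\jc_{0,j}\bigr|\geqslant B$ is what keeps the leading coefficient away from zero and bounds the relevant condition number uniformly over the class. \textbf{I expect this cancellation-and-conditioning step to be the main obstacle}: it is exactly the mechanism by which Eckhoff's method beats the trivial rate, and it must be controlled with constants depending only on $\left(\ord,\ord_{1},\np,J,A,B,R\right)$.

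With $\nn{\jp_{j}}$ fixed, the magnitudes $\nn{\jc}_{\ell,j}$ solve a confluent Vandermonde-type linear system in the rescaled data. I would bound $\bigl|\nn{\jc}_{\ell,j}-\jc_{\ell,j}\bigr|$ by inverting this system with right-hand side carrying the absolute data error $O\left(\sc^{-1}\right)$ (from the unmodeled terms, the smooth tail, and the propagated location error), its conditioning again controlled through $B$ and the placement of the window near $k=\sc$. The growth in $\ell$ is the natural scaling of polynomial coefficient recovery: $\jc_{\ell,j}$ is the coefficient of $\left(\imath k\right)^{\ord_{1}-\ell}$, so extracting it from samples at $k\approx\sc$ amplifies the baseline error by $\sc^{\ell}$, producing the stated $O\left(\sc^{\ell-\ord_{1}-1}\right)$.

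For the pointwise bound I would exploit the algebraic identity hidden in \eqref{eq:final-approximation}: writing $g\isdef\fun-\nn{\sing}$, the formula gives $\nn{\fun}=\frsum[g]+\nn{\sing}$, so that $\fun-\nn{\fun}=g-\frsum[g]$ is simply the truncated-Fourier error of $g$. Splitting $g=\smooth+\bigl(\sing-\nn{\sing}\bigr)$, the genuinely smooth part contributes $O\left(\sc^{-\ord-1}\right)$ by \eqref{eq:best-approximation-smooth}. For the singular mismatch I would \emph{not} bound each $V_{\ell}$ separately --- that loses too much at the lowest orders --- but instead use the exact coefficients $\fc\left(V_{\ell}\right)\propto\ee^{-\imath k\jp}\left(\imath k\right)^{-\ell-1}$ together with Abel summation over the tail $\left|k\right|>\sc$, capturing the cancellation between each true jump at $\jp_{j}$ and its reconstructed counterpart at $\nn{\jp_{j}}$ through $\bigl|\ee^{-\imath k\jp_{j}}-\ee^{-\imath k\nn{\jp_{j}}}\bigr|\leqslant\left|k\right|\bigl|\jp_{j}-\nn{\jp_{j}}\bigr|$. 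Feeding in the magnitude errors $O\left(\sc^{\ell-\ord_{1}-1}\right)$, the location error $O\left(\sc^{-\ord_{1}-2}\right)$, and the unreconstructed high-order jumps $\ell\geqslant\ord_{1}+1$, one obtains on any fixed jump-free region the bound $O\left(\sc^{-\ord_{1}-1}\right)$, completing \eqref{eq:half-estimates}.
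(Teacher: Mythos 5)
A point of reference first: the paper itself never proves \prettyref{thm:half-accuracy} --- it is quoted from \cite{batyomAlgFourier}, and the only in-paper guidance about its proof is the remark following it, namely that the error terms of orders between $\ord_{1}$ and $\ord$ produce \emph{cancellations} when substituted into the polynomial equation of step 3, perturbing its root by only $O\left(\sc^{-\ord_{1}-2}\right)$. Your overall architecture (localization, single-jump polynomial equation, structured perturbation, cancellation, Vandermonde system, Abel-type tail estimates for the pointwise bound) is consistent with that description, and your $\ord_{1}=0$ ratio computation is the correct toy case. Nevertheless there is a genuine gap: \textbf{your argument never uses the hypothesis $\ord_{1}\leqslant\left\lfloor \ord/2\right\rfloor$}, and an argument that does not use it proves too much. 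You declare the unstructured smooth tail $O\left(k^{-\ord-2}\right)$ ``negligible'', but after rescaling by $\left(\imath k\right)^{\ord_{1}+1}$ it is a coefficient perturbation of size $O\left(\sc^{\ord_{1}-\ord-1}\right)$ carrying \emph{no} exponential structure, hence admitting \emph{no} cancellation; requiring its contribution to the recovered root to stay below the target $\sc^{-\ord_{1}-2}$ is precisely the half-order constraint (the borderline case $\ord_{1}=\ord/2$ needs the expansion pushed one term further, using the jump of the $\left(\ord+1\right)$-st derivative permitted by \prettyref{def:cd}). Were that tail truly negligible, your sketch applied with $\ord_{1}=\ord$ would give full-order accuracy from consecutive samples --- contradicting both the experiments of \prettyref{sec:numerics} and the analysis in \prettyref{sec:optimality}, where the consecutive index set $S_{1}$ is shown to yield only first-order accuracy.

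Second, your quantitative accounting of the mechanism is off, and two errors happen to compensate. For consecutive samples near $k=\sc$, the exact polynomial is, to leading order in $\sc$, proportional to $\sc^{\ord_{1}}\left(w-1\right)^{\ord_{1}+1}$ in the variable $w=u/\w$: all of its roots cluster at $\w$, and its derivative at the root is $O\left(1\right)$ (about $\left(\ord_{1}+1\right)!\left|\jc_{0,j}\right|$), \emph{not} of the order $\sc^{\ord_{1}}$ of its coefficients. Consequently the implicit step ``relative coefficient perturbation $O\left(\sc^{-\ord_{1}-1}\right)$ implies root error of the same order'' is invalid here --- the polynomial is a perturbation of one with a multiple root, and the correct naive bound is only $O\left(\sc^{-1}\right)$, exactly what the paper reports for the original Eckhoff choice in \prettyref{sec:optimality}. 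The entire gain from $\sc^{-1}$ to $\sc^{-\ord_{1}-2}$ must therefore come from cancellation: the $\left(\ord_{1}+1\right)$-st finite difference implicit in the construction damps each structured term $k^{\ord_{1}-\ell}$ (for $\ell>\ord_{1}$) by $\ord_{1}+1$ orders, down to $O\left(k^{-\ell-1}\right)$, and dividing by the $O\left(1\right)$ derivative gives the claim --- not ``one further order'' on top of a relative-perturbation bound. Moreover, since the higher derivatives of the exact polynomial at the root grow like powers of $\sc$, the linearization (or Rouch\'e) step must be carried out on a disk of radius $O\left(\sc^{-\ord_{1}-2}\right)$ with the cancellation verified on the whole disk, not just at the exact root. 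This conditioning-plus-cancellation analysis is exactly the ``non-trivial part'' the paper attributes to \cite{batyomAlgFourier}; you correctly flagged it as the main obstacle, but the accounting you propose for it would not survive being made precise.
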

The non-trivial part of the proof of this result was to analyze in
detail the polynomial equation $p\left(\jp_{j}\right)=0$ in step
3 of \prettyref{alg:half-algo}. It turned out that additional orders
of smoothness (namely, between $\ord_{1}$ and $\ord$) produced an
error term $\er$ in \eqref{eq:half-algo-single-jump} which, when
substituted into the polynomial $p$, resulted in unexpected cancellations
due to which the root $\jp_{j}$ was perturbed only by $O\left(\sc^{-\ord_{1}-2}\right)$.
This phenomenon was first noticed by Eckhoff himself in \cite{eckhoff1995arf}
for $\ord=1$, but at the time its full significance was not realized.

\section{\label{sec:algorithm}The decimated Eckhoff algorithm}

\global\long\def\jcc{\alpha}
\global\long\def\scc{N}

In this section we present the ``decimated Eckhoff algorithm'',
which has a single essential (but crucial) difference compared to
\prettyref{alg:half-algo}. The difference is that in step 3, we solve
the ``full-order'' system, while choosing the indices $k$ to be
evenly distributed across the range $\left\{ 0,1,\dots,\sc\right\} $
(instead of the original choice $k=\sc-\ord-1,\sc-\ord,\dots,\sc$).
That is, denoting 
\[
\scc\isdef\left\lfloor \frac{\sc}{\left(\ord+2\right)}\right\rfloor ,
\]
the modified system \eqref{eq:half-algo-single-jump} reads

\begin{equation}
\nn{\fc}=\underbrace{\frac{\w^{k}}{2\pi}\sum_{\ell=0}^{\ord}\frac{\jc_{\ell}}{\left(\imath k\right)^{\ell+1}}}_{\isdef\fc}+\epsilon_{k},\quad k=\scc,2\scc,\dots,\left(\ord+2\right)\scc,\quad\left|\epsilon_{k}\right|\leq R\cdot k^{-\ord-2}.\label{eq:decimated-system-single-jump}
\end{equation}

Here $\w=\ee^{-\imath\jp}$ with $\jp=\jp_{j}\in\left[-\pi,\pi\right]$
being the unknown location of the (single) discontinuity of the localized
function $\fun_{j}$ (see step 2 of \prettyref{alg:half-algo}). 

The decimated system \eqref{eq:decimated-system-single-jump} is solved
in two steps. First, a polynomial equation $q_{\scc}^{\ord}\left(u\right)=0$
is constructed from the values $\left\{ \nn{\fc}\right\} _{k=\scc,2\scc,\dots,\left(\ord+2\right)\scc}$.
This $q_{\scc}^{\ord}$ is in fact a perturbation of an ``exact''
equation $p_{\scc}^{\ord}\left(u\right)=0$, constructed from the
unperturbed (and unknown) values $\left\{ \fc\right\} $ as in \eqref{eq:decimated-system-single-jump}.
This $p_{\scc}^{\ord}$ is defined explicitly below in \eqref{eq:pnd-def}.
As we show in \prettyref{prop:pnd-exact-root-z}, one of the roots
of this exact equation is the value $z=\w^{\scc}$. Thus, by solving
the perturbed equation $q_{\scc}^{\ord}\left(u\right)=0$ we recover
the unknown $\nn z=\ee^{-\imath\nn{\jp}\scc}$, and by extracting
$\scc^{\text{th}}$ root and subsequently taking logarithm we obtain
the approximation to the jump $\nn{\jp}$. The operation of taking
root generally results in a multi-valued solution%
\footnote{For example, if $\scc=2$ then the solution $z=1$ corresponds to
either $\jp=0$ or $\jp=\pm\pi$. In the general case, there are $\scc$
possible solutions, as follows: 
\begin{eqnarray*}
\ee^{\imath\jp\scc} & = & \ee^{\imath t}\\
\jp\scc-t & = & 2\pi n\\
\jp & = & \frac{t}{\scc}+\frac{2\pi}{\scc}n,\qquad n\in\integers.
\end{eqnarray*}
}. Therefore, to ensure correct reconstruction, we need an additional
assumption that the jump $\jp$ must be known with \emph{a-priori}
accuracy of the order $o\left(\scc^{-1}\right)$. Once the approximate
jump location $\nn{\jp}$ is reconstructed, the jump magnitudes $\left\{ \jc_{\ell,j}\right\} _{\ell=0}^{\ord}$
are recovered by solving a linear system of equations \eqref{eq:the-linear-system}.

The above procedure for recovery of a single jump  is summarized in
\prettyref{alg:new-reconstruction-single-jump} \vpageref{alg:new-reconstruction-single-jump}.
The complete algorithm is outlined in \prettyref{alg:full-algo} \vpageref{alg:full-algo}.

Let us now define the ``exact'' equation $p_{\scc}^{\ord}\left(u\right)=0$.
Denote $\jcc_{\ell}=\imath^{\ord+1-\ell}\jc_{\ord-\ell}$ and let
\begin{equation}
\meas\isdef\w^{k}\sum_{\ell=0}^{\ord}\jcc_{\ell}k^{\ell}.\label{eq:mk-def}
\end{equation}

With this notation, multiply both sides of \eqref{eq:decimated-system-single-jump}
by $\left(2\pi\right)\left(\imath k\right)^{\ord+1}$ and get
\begin{equation}
\apprmeas\isdef2\pi\left(\imath k\right)^{\ord+1}\widetilde{\fc}=\meas+\er,\qquad k=\scc,2\scc,\dots,\left(\ord+2\right)\scc,\quad\left|\er\right|\leqslant R\cdot k^{-1}.\label{eq:noisy-measurements}
\end{equation}

Recall that we have defined $z=\w^{\scc}$. Therefore we have by \eqref{eq:mk-def}
\begin{equation}
\meas[\left(j+1\right)\scc]=z^{j+1}\sum_{\ell=0}^{\ord}\jcc_{\ell}\left(j+1\right)^{\ell}\scc^{\ell}.\label{eq:mjN}
\end{equation}

\begin{defn}
Let
\begin{equation}
p_{\scc}^{\ord}\left(u\right)\isdef\sum_{j=0}^{\ord+1}\left(-1\right)^{j}{\ord+1 \choose j}\meas[\left(j+1\right)\scc]u^{\ord+1-j}.\label{eq:pnd-def}
\end{equation}
\end{defn}
\begin{prop}
\label{prop:pnd-exact-root-z}The point $u=z$ is a root of $p_{\scc}^{\ord}\left(u\right)$.\end{prop}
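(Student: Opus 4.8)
The plan is to substitute $u=z$ directly into the definition \eqref{eq:pnd-def} and to show that the resulting alternating binomial sum collapses to zero by a finite-difference argument. First I would insert the closed form \eqref{eq:mjN} for $\meas[\left(j+1\right)\scc]$ into each summand. The key observation is that the power of $z$ contributed by $\meas[\left(j+1\right)\scc]$, namely $z^{j+1}$, combines with the factor $u^{\ord+1-j}=z^{\ord+1-j}$ to produce the $j$-independent exponent $z^{\ord+2}$. Pulling this common factor out and interchanging the two finite summations would leave
\[
p_{\scc}^{\ord}\left(z\right)=z^{\ord+2}\sum_{\ell=0}^{\ord}\jcc_{\ell}\scc^{\ell}\sum_{j=0}^{\ord+1}\left(-1\right)^{j}{\ord+1 \choose j}\left(j+1\right)^{\ell}.
\]

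It would then suffice to prove that, for every $\ell=0,1,\dots,\ord$, the inner sum
\[
S_{\ell}\isdef\sum_{j=0}^{\ord+1}\left(-1\right)^{j}{\ord+1 \choose j}\left(j+1\right)^{\ell}
\]
vanishes. The decisive step would be to recognize $S_{\ell}$ as a forward finite difference: writing $g\left(x\right)\isdef\left(x+1\right)^{\ell}$ and invoking the identity $\sum_{j=0}^{n}\left(-1\right)^{j}{n \choose j}g\left(j\right)=\left(-1\right)^{n}\Delta^{n}g\left(0\right)$ with $n=\ord+1$, one gets $S_{\ell}=\left(-1\right)^{\ord+1}\Delta^{\ord+1}g\left(0\right)$.

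Finally, since $g$ is a polynomial in $x$ of degree $\ell\leq\ord<\ord+1$, and the $\left(\ord+1\right)$-st finite difference annihilates any polynomial of degree at most $\ord$, we would conclude $\Delta^{\ord+1}g\equiv0$, hence $S_{\ell}=0$ for all admissible $\ell$, and therefore $p_{\scc}^{\ord}\left(z\right)=0$. I expect the only non-routine ingredient to be the identification of the alternating binomial sum with an iterated finite difference; once that is in place the vanishing is immediate, and everything else is bookkeeping of the exponents of $z$. I do not anticipate a genuine obstacle here — the single point requiring care is the index matching, ensuring that the polynomial degree $\ell$ stays strictly below the order $\ord+1$ of the difference operator, which is exactly why the system \eqref{eq:decimated-system-single-jump} is built from the full range $\ell=0,\dots,\ord$ together with the $\ord+2$ decimated samples.
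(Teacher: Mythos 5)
Your proposal is correct and follows essentially the same route as the paper's own proof: substitute $u=z$ using \eqref{eq:mjN}, factor out $z^{\ord+2}$, interchange the sums, and recognize the inner alternating binomial sum as an $\left(\ord+1\right)$-st forward difference of a polynomial of degree $\ell\leq\ord$, which therefore vanishes. The only cosmetic difference is that you track the shift explicitly via $g\left(x\right)=\left(x+1\right)^{\ell}$ and the sign factor $\left(-1\right)^{\ord+1}$, while the paper states the same fact directly for $\varphi\left(k\right)=k^{\ell}$.
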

\begin{proof}
From \eqref{eq:mjN} and \eqref{eq:pnd-def} we have
\begin{eqnarray*}
p_{\scc}^{\ord}\left(z\right) & = & \sum_{j=0}^{\ord+1}\left(-1\right)^{j}{\ord+1 \choose j}z^{j+1}\sum_{\ell=0}^{\ord}\jcc_{\ell}\left(j+1\right)^{\ell}\scc^{\ell}z^{\ord+1-j}\\
 & = & z^{\ord+2}\sum_{\ell=0}^{\ord}\jcc_{\ell}\scc^{\ell}\left\{ \sum_{j=0}^{\ord+1}\left(-1\right)^{j}{\ord+1 \choose j}\left(j+1\right)^{\ell}\right\} .
\end{eqnarray*}
The expression in the curly braces is just the $\ord+1$-st forward
difference operator applied to the polynomial function $\varphi\left(k\right)=k^{\ell}$.
Since $\ell<\ord+1$, this is always zero (see e.g. \cite{elaydi2005ide}). 
\end{proof}
Now let us explicitly write the linear sysem for the jump magnitudes.
\begin{defn}
Let $V_{\scc}^{\ord}$ denote the $\left(\ord+1\right)\times\left(\ord+1\right)$
matrix
\[
V_{\scc}^{\ord}\isdef\begin{bmatrix}1 & \scc & \scc^{2} & \dots & \scc^{\ord}\\
1 & 2\scc & \left(2\scc\right)^{2} & \dots & \left(2\scc\right)^{\ord}\\
\vdots & \vdots & \vdots & \vdots & \vdots\\
1 & \left(\ord+1\right)\scc & \left(\left(\ord+1\right)\scc\right)^{2} & \dots & \left(\left(\ord+1\right)\scc\right)^{\ord}
\end{bmatrix}.
\]

\end{defn}
Note that $V_{\scc}^{\ord}$ is the Vandermonde matrix on the points
$\left\{ \scc,2\scc,\dots,\left(\ord+1\right)\scc\right\} $ and thus
it is non-degenerate for all $\scc\geqslant1$.
\begin{prop}
The vector of exact magnitudes $\left\{ \jcc_{j}\right\} $ satisfies
\begin{equation}
\begin{bmatrix}\meas[\scc]\w^{-\scc}\\
\meas[2\scc]\w^{-2\scc}\\
\vdots\\
\meas[\left(\ord+1\right)\scc]\w^{-\left(\ord+1\right)\scc}
\end{bmatrix}=V_{\scc}^{\ord}\begin{bmatrix}\jcc_{0}\\
\jcc_{1}\\
\vdots\\
\jcc_{\ord}
\end{bmatrix}.\label{eq:the-linear-system}
\end{equation}
\end{prop}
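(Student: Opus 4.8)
The plan is to obtain \eqref{eq:the-linear-system} by directly unwinding the definition \eqref{eq:mk-def} of $\meas$ at the decimated indices, so that the exponential prefactor $\w^{k}$ cancels and what remains is a pure polynomial sampling, which is precisely what the Vandermonde matrix $V_{\scc}^{\ord}$ encodes. First I would specialize \eqref{eq:mk-def} to $k=r\scc$ for $r=1,2,\dots,\ord+1$, which gives $\meas[r\scc]=\w^{r\scc}\sum_{\ell=0}^{\ord}\jcc_{\ell}\left(r\scc\right)^{\ell}$.

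Next I would multiply the $r$-th such identity by $\w^{-r\scc}$; this is exactly the factor appearing on the left-hand side of \eqref{eq:the-linear-system}, and it strips off the exponential, leaving $\meas[r\scc]\,\w^{-r\scc}=\sum_{\ell=0}^{\ord}\left(r\scc\right)^{\ell}\jcc_{\ell}$. The right-hand side is a linear combination of the unknowns $\jcc_{0},\dots,\jcc_{\ord}$ whose coefficients are $1,r\scc,\left(r\scc\right)^{2},\dots,\left(r\scc\right)^{\ord}$ — that is, exactly the $r$-th row of $V_{\scc}^{\ord}$. Stacking these $\ord+1$ scalar identities for $r=1,\dots,\ord+1$ then assembles into the claimed matrix equation.

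Honestly, I expect no real obstacle here: in contrast to the delicate perturbation analysis of the polynomial root $\jp_{j}$ alluded to after \prettyref{thm:half-accuracy}, this proposition is a pure bookkeeping statement about the \emph{exact} (noise-free) moments $\meas$, and it amounts to substitution. The only point that merits care is index consistency — matching the sampling nodes $\left\{ \scc,2\scc,\dots,\left(\ord+1\right)\scc\right\}$ used to define $V_{\scc}^{\ord}$ with the indices $k=r\scc$ fed into \eqref{eq:mk-def}, and checking that the monomial degrees $\ell=0,\dots,\ord$ range over exactly the $\ord+1$ columns of the matrix. This confirms that the system is square and, by the already-noted nondegeneracy of $V_{\scc}^{\ord}$ for all $\scc\geqslant1$, uniquely solvable for the magnitudes once $\w$ (hence the root $z=\w^{\scc}$) has been recovered from $p_{\scc}^{\ord}$.
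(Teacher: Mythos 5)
Your proof is correct and is exactly the argument the paper intends: the paper's own proof is the one-line ``Immediately follows from \eqref{eq:mk-def},'' and your unwinding of \eqref{eq:mk-def} at $k=r\scc$, cancelling $\w^{r\scc}$, and identifying the coefficients $1,r\scc,\dots,\left(r\scc\right)^{\ord}$ with the rows of $V_{\scc}^{\ord}$ is precisely that substitution spelled out.
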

\begin{proof}
Immediately follows from \eqref{eq:mk-def}.
\end{proof}
\begin{algorithm}[H]
\begin{raggedright}
Let there be given the first $\scc\left(\ord+2\right)\gg1$ Fourier
coefficients of the function $\fun_{j}$ as in \eqref{eq:decimated-system-single-jump},
and assume that the jump position $\jp$ is already known with accuracy
$o\left(\scc^{-1}\right)$.
\par\end{raggedright}
\begin{enumerate}
\item Construct the polynomial 
\begin{eqnarray*}
q_{\scc}^{\ord}\left(u\right) & = & \sum_{j=0}^{\ord+1}\left(-1\right)^{j}{\ord+1 \choose j}\apprmeas[\left(j+1\right)\scc]u^{\ord+1-j}
\end{eqnarray*}
 from the given perturbed measurements $\apprmeas[\scc],\apprmeas[2\scc],\dots,\apprmeas[\left(\ord+2\right)\scc]$
as in \eqref{eq:noisy-measurements}.
\item Find the root $\widetilde{z}$ which is closest to the unit circle
(in fact any root will suffice, see \prettyref{rem:all-roots-are-good}
below).
\item Take $\widetilde{\w}=\sqrt[\scc]{\widetilde{z}}$. Note that in general
there are $\scc$ possible values on the unit circle, but since we
already know the approximate location of $\w$, the correct value
can be chosen consistently.
\item Set $\widetilde{\jp}=-\arg\widetilde{\omega}$.
\item To recover the magnitudes, solve the perturbed linear system \eqref{eq:the-linear-system}:
\begin{equation}
\begin{bmatrix}\apprmeas[\scc]\widetilde{\w}_{\scc}^{-\scc}\\
\apprmeas[2\scc]\widetilde{\w}_{\scc}^{-2\scc}\\
\vdots\\
\apprmeas[\left(\ord+1\right)\scc]\widetilde{\w}_{\scc}^{-\left(\ord+1\right)\scc}
\end{bmatrix}=V_{\scc}^{\ord}\begin{bmatrix}\widetilde{\jcc}_{0}\\
\widetilde{\jcc}_{1}\\
\vdots\\
\widetilde{\jcc}_{\ord}
\end{bmatrix}.\label{eq:linear-system-perturbed}
\end{equation}

\end{enumerate}
\caption{Recovery of single jump parameters}
\label{alg:new-reconstruction-single-jump}
\end{algorithm}

\begin{algorithm}[H]
\begin{raggedright}
Let $f\in PC\left(\ord+1,\np\right)$, and assume that $\fun=\sing^{\left(\ord\right)}+\smooth$
where $\sing^{\left(\ord\right)}$ is the piecewise polynomial absorbing
all discontinuities of $\fun$, and $\smooth\in C^{\ord+1}.$ Assume
the a-priori bounds as in \prettyref{alg:half-algo}.
\par\end{raggedright}
\begin{enumerate}
\item Using \prettyref{alg:half-algo}, obtain approximate values of the
jumps (up to accuracy $O\left(\scc^{-\left\lfloor \frac{\ord}{2}\right\rfloor -2}\right)$)
and the Fourier coefficients of the functions $\fun_{j}$. (By \prettyref{thm:half-accuracy}
this is indeed possible.)
\item Use \prettyref{alg:new-reconstruction-single-jump} to further improve
the accuracy of reconstructing the jumps $\left\{ \nn{\jp}_{j}\right\} _{j=1}^{\np}$
and the magnitudes $\left\{ \nn{\jc}_{\ell,j}\right\} $.
\item Take the final approximation as defined in \eqref{eq:final-approximation}.
\end{enumerate}
\caption{Full accuracy Fourier approximation}
\label{alg:full-algo}
\end{algorithm}

\section{\label{sec:accuracy}Main result}

The key result of this paper is the following.
\begin{thm}
\label{thm:jump-accuracy-new}Assume that
\begin{equation}
\left|\jc_{\ell}\right|\leqslant A^{*}<\infty,\quad\left|\jc_{0}\right|\geqslant B^{*}>0.\label{eq:single-jump-apriori-bounds}
\end{equation}
Then for $\scc\gg1$, \prettyref{alg:new-reconstruction-single-jump}
recovers the parameters of a single jump from the data $\apprmeas$
(given by \eqref{eq:noisy-measurements}) with the following accuracy:
\begin{eqnarray*}
\left|\nn{\jp}-\jp\right| & \leqslant & C_{4}\frac{R^{*}}{B^{*}}\scc^{-\ord-2},\\
\left|\nn{\jcc}_{\ell}-\jcc_{\ell}\right| & \leqslant & C_{5,\ell}R^{*}\left(1+\frac{A^{*}}{B^{*}}\right)\scc^{-\ell-1},\qquad\ell=0,1,\dots,\ord,
\end{eqnarray*}
where $R^{*}$ is some constant for which $ $\eqref{eq:noisy-measurements}
holds, $C_{4}$ depends only on $\ord$ and $C_{5,\ell}$ depends
only on $\ell$ and $\ord$.
\end{thm}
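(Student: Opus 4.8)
The plan is to recast the entire recovery as a perturbation analysis of the root $z=\w^{\scc}$ of the exact polynomial $p_{\scc}^{\ord}$ (\prettyref{prop:pnd-exact-root-z}), and then to propagate the estimates through the two post-processing steps of \prettyref{alg:new-reconstruction-single-jump}: extraction of the $\scc^{\text{th}}$ root (to get $\nn\jp$) and inversion of the Vandermonde system \eqref{eq:the-linear-system} (to get the $\nn\jcc_{\ell}$). Two quantitative facts about $p_{\scc}^{\ord}$ drive everything: a lower bound of size $\scc^{\ord}$ on $|(p_{\scc}^{\ord})'(z)|$, and the separation of the remaining $\ord$ roots from the unit circle, uniformly in $\scc$.

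First I would determine the root structure of $p_{\scc}^{\ord}$. Writing $P(k)=\sum_{\ell=0}^{\ord}\jcc_{\ell}k^{\ell}$ so that $\meas=\w^{k}P(k)$, differentiating \eqref{eq:pnd-def} and using \eqref{eq:mjN} together with $(\ord+1-j)\binom{\ord+1}{j}=(\ord+1)\binom{\ord}{j}$, the derivative at $z$ collapses to the $\ord$-th forward difference of the degree-$\ord$ polynomial $j\mapsto P((j+1)\scc)$, giving the closed form $(p_{\scc}^{\ord})'(z)=(-1)^{\ord}(\ord+1)!\,\jcc_{\ord}\scc^{\ord}z^{\ord+1}$; since the leading coefficient $\jcc_{\ord}$ is bounded below in modulus by $B^{*}$ (by \eqref{eq:single-jump-apriori-bounds}), $z$ is a simple root with $|(p_{\scc}^{\ord})'(z)|\gtrsim B^{*}\scc^{\ord}$. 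For the remaining roots I would factor out the dominant term $\meas[(j+1)\scc]=\jcc_{\ord}\scc^{\ord}z^{j+1}(j+1)^{\ord}(1+O(\scc^{-1}))$ from \eqref{eq:mjN}: after normalization $p_{\scc}^{\ord}(u)$ tends, as $\scc\to\infty$, to a nonzero multiple of $Q(u/z)$, where $Q(v)=\sum_{j=0}^{\ord+1}(-1)^{j}\binom{\ord+1}{j}(j+1)^{\ord}v^{\ord+1-j}$ is an Eulerian-type polynomial all of whose roots other than $v=1$ stay a fixed distance from the unit circle. By continuity of the roots the non-principal roots of $p_{\scc}^{\ord}$ therefore lie at distance $\ge c_{0}(\ord)>0$ from the unit circle for all large $\scc$, while $z$ lies on it; this guarantees that the root $\nn z$ closest to the unit circle in step~2 is exactly the one tracking $z$. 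I expect this separation claim to be the main obstacle, since the validity of step~2 and the factorization estimate below both hinge on it, and it demands genuine control of the limit polynomial $Q$ rather than a soft argument.

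Next I would make the root perturbation quantitative. By \eqref{eq:noisy-measurements} the coefficients of $q_{\scc}^{\ord}-p_{\scc}^{\ord}$ are $O(R^{*}/\scc)$, hence $|(q_{\scc}^{\ord}-p_{\scc}^{\ord})(u)|=O(R^{*}/\scc)$ uniformly for $u$ near the unit circle. A two-step argument then yields the sharp bound: first a coarse Rouch\'e localization places $\nn z$ in a fixed disk about $z$ excluding all non-principal roots; then the factorization $p_{\scc}^{\ord}(u)=\meas[\scc]\prod_{i=0}^{\ord}(u-u_{i})$ with principal root $u_{0}=z$ gives $p_{\scc}^{\ord}(\nn z)=\meas[\scc](\nn z-z)\prod_{i\ge1}(\nn z-u_{i})$, where $\prod_{i\ge1}|\nn z-u_{i}|\asymp\prod_{i\ge1}|z-u_{i}|=|(p_{\scc}^{\ord})'(z)|/|\meas[\scc]|$; this converts $|p_{\scc}^{\ord}(\nn z)|=|(q_{\scc}^{\ord}-p_{\scc}^{\ord})(\nn z)|=O(R^{*}/\scc)$ into $|\nn z-z|\le C\,R^{*}/(B^{*}\scc^{\ord+1})$. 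Finally, writing $\nn z=z(1+\zeta)$ with $|\zeta|=|\nn z-z|$, the map $z\mapsto-\tfrac{1}{\scc}\arg z$ of steps 3--4 divides the error by $\scc$, so $|\nn\jp-\jp|=\tfrac{1}{\scc}|\arg(1+\zeta)|\le C_{4}\tfrac{R^{*}}{B^{*}}\scc^{-\ord-2}$; the $\scc^{\text{th}}$-root branch is fixed by the assumed $o(\scc^{-1})$ a-priori location of $\jp$.

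For the magnitudes I would compare \eqref{eq:linear-system-perturbed} with \eqref{eq:the-linear-system}. Since $\nn\w^{-k\scc}=\nn z^{-k}$, the two right-hand sides $b$ and $\nn b$ satisfy $\nn b_{k}-b_{k}=\er[k\scc]\nn z^{-k}+\meas[k\scc](\nn z^{-k}-z^{-k})$, which is $O(R^{*}/\scc)+O(A^{*}\scc^{\ord})\cdot O(R^{*}/(B^{*}\scc^{\ord+1}))=O(\tfrac{R^{*}}{\scc}(1+\tfrac{A^{*}}{B^{*}}))$, using $|\meas[k\scc]|=|P(k\scc)|=O(A^{*}\scc^{\ord})$ for fixed $k\le\ord+1$. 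The decisive point is the scaling $V_{\scc}^{\ord}=V_{1}^{\ord}\diag(1,\scc,\dots,\scc^{\ord})$, whence $(V_{\scc}^{\ord})^{-1}=\diag(1,\scc^{-1},\dots,\scc^{-\ord})(V_{1}^{\ord})^{-1}$ with $(V_{1}^{\ord})^{-1}$ depending only on $\ord$; applying it to $\nn b-b$ attaches an extra factor $\scc^{-\ell}$ to the $\ell$-th component and gives $|\nn\jcc_{\ell}-\jcc_{\ell}|\le C_{5,\ell}R^{*}(1+\tfrac{A^{*}}{B^{*}})\scc^{-\ell-1}$, completing the proof.
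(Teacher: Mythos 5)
Your proposal follows the same overall architecture as the paper's proof: a perturbation analysis of the root $z=\w^{\scc}$ of the exact polynomial $p_{\scc}^{\ord}$ under coefficient errors of size $O\left(R^{*}\scc^{-1}\right)$, the gain of a factor $\scc^{-1}$ from the $\scc^{\text{th}}$-root extraction, and propagation of the node error through the Vandermonde system via the scaling $V_{\scc}^{\ord}=V_{1}^{\ord}\diag\left\{ 1,\scc,\dots,\scc^{\ord}\right\} $. Several of your steps are correct and even sharper than the paper's: the closed form $\left(p_{\scc}^{\ord}\right)'\left(z\right)=\left(-1\right)^{\ord}\left(\ord+1\right)!\,\jcc_{\ord}\scc^{\ord}z^{\ord+1}$ obtained by forward differences matches the computation the paper carries out only in \prettyref{prop:c9-refinement} (there derived from the decomposition \eqref{eq:p-decomposition}); your coarse-localization-plus-factorization step is a legitimate alternative to the shrinking-disk Rouch\'e argument of \prettyref{lem:accuracy-root}, modulo the separation input discussed below; and your direct estimate of $\nn{z}^{-k}-z^{-k}$ reproduces \eqref{eq:rhs-error-linear}.

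The genuine gap is the claim you flagged yourself and then left unproven: that all roots of the limit polynomial $Q\left(v\right)=\sum_{j=0}^{\ord+1}\left(-1\right)^{j}{\ord+1 \choose j}\left(j+1\right)^{\ord}v^{\ord+1-j}$ other than $v=1$ lie at a fixed positive distance from the unit circle. In the paper's notation $Q=s_{\ord}^{\ord}$, and this claim is exactly \prettyref{lem:simple-roots} --- the technical core of the paper --- which states that $s_{\ord}^{\ord}$ is square-free with all roots in $[1,+\infty)$; its proof is genuinely nontrivial, resting on the recursion \eqref{eq:sid-recursion}, the differential relation \eqref{eq:sid-diff-rel}, Rolle's theorem, and an induction starting from $s_{0}^{\ord}\left(w\right)=\left(w-1\right)^{\ord+1}$ and $s_{1}^{\ord}\left(w\right)=\left(w-1\right)^{\ord}\left(w-\left(\ord+2\right)\right)$. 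Nothing in your proposal excludes the possibility that $Q$ has a complex root $v_{0}\neq1$ with $\left|v_{0}\right|=1$; in that case $q_{\scc}^{\ord}$ has a root asymptotically on the unit circle near $zv_{0}$, step 2 of \prettyref{alg:new-reconstruction-single-jump} may select it instead of the root tracking $z$, and steps 3--4 then return $\jp-\frac{1}{\scc}\arg v_{0}$ up to small corrections, i.e.\ an error of order $\scc^{-1}$ instead of $\scc^{-\ord-2}$. (The weaker statement that the spurious roots are separated from $z$ itself, which is all that the factorization bound $\prod_{i\geqslant1}\left|\nn{z}-u_{i}\right|\asymp\prod_{i\geqslant1}\left|z-u_{i}\right|$ requires, could in fact be salvaged from your derivative formula together with a Cauchy bound on the roots of the normalized polynomial $p_{\scc}^{\ord}/\meas[\scc]$; the irreparable casualty of the missing lemma is the root-selection rule.) The paper's lemma also yields more than you need: since all the limit roots are real and positive, every root of $q_{\scc}^{\ord}$ lies near the ray of angle $-\jp\scc$, which is why \prettyref{rem:all-roots-are-good} can assert that any root recovers $\jp$. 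Until the separation property of $Q$ is proved --- by the paper's Rolle-and-induction argument or otherwise --- your proposal is an outline rather than a proof.
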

An immediate consequence is the resolution of Eckhoff's conjecture.
\begin{thm}
\label{thm:full-accuracy-final}Let $\fun\in PC\left(\ord+1,\np\right)$
and let $\nn{\fun}$ be the approximation of order $\ord$ computed
by \prettyref{alg:full-algo}. Then for $\sc\gg1$
\begin{equation}
\begin{split}\left|\nn{\jp_{j}}-\jp_{j}\right| & \leqslant C_{6}\left(\ord,\np,J,A,B,R\right)\cdot\sc^{-\ord-2};\\
\left|\nn{\jc}_{\ell,j}-\jc_{\ell,j}\right| & \leqslant C_{7}\left(\ell,\ord,\np,J,A,B,R\right)\cdot\sc^{\ell-\ord-1},\qquad\ell=0,1,\dots,\ord;\\
\left|\nn{\fun}\left(x\right)-\fun\left(x\right)\right| & \leqslant C_{8}\left(\ord,\np,J,A,B,R\right)\cdot\sc^{-\ord-1}.
\end{split}
\label{eq:full-estimates}
\end{equation}
\end{thm}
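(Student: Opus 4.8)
The plan is to treat \prettyref{thm:full-accuracy-final} as a corollary of the single-jump estimate \prettyref{thm:jump-accuracy-new}, which is the only genuinely new ingredient, by carrying out the two reduction steps already built into \prettyref{alg:full-algo}. First I would invoke the localization of \prettyref{alg:half-algo}: its steps~1--2 produce, for each $j$, a function $\fun_{j}=\fun\cdot h_{j}$ with a single jump at $\jp_{j}$ whose magnitudes $\jc_{\ell,j}$ coincide with those of $\fun$ (since $h_{j}\equiv1$ near $\jp_{j}$) and whose smooth remainder lies in $C^{\ord+1}$. As established in \cite{batyomAlgFourier}, this forces the Fourier data of $\fun_{j}$ to obey the single-jump model \eqref{eq:noisy-measurements} with constants $A^{*},B^{*},R^{*}$ controlled by $A,B,R,J,\ord$ (in particular $B^{*}=B$, while $R^{*}$ depends on $R,A,J$ and the fixed $C^{\ord+1}$-norm of $h_{j}$). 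Moreover \prettyref{thm:half-accuracy} localizes each $\jp_{j}$ to accuracy $O\!\left(\sc^{-\lfloor\ord/2\rfloor-2}\right)$; since $\lfloor\ord/2\rfloor+2\geq2>1$ this is $o(\sc^{-1})=o(\scc^{-1})$, precisely the a-priori accuracy required to disambiguate the $\scc$-th root extraction in step~3 of \prettyref{alg:new-reconstruction-single-jump}.

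With the single-jump model verified, I would apply \prettyref{thm:jump-accuracy-new} to each $\fun_{j}$ with $\scc=\lfloor\sc/(\ord+2)\rfloor$, obtaining $|\nn{\jp}_{j}-\jp_{j}|\leq C_{4}(R^{*}/B^{*})\scc^{-\ord-2}$ and $|\nn{\jcc}_{\ell,j}-\jcc_{\ell,j}|\leq C_{5,\ell}R^{*}(1+A^{*}/B^{*})\scc^{-\ell-1}$. Two bookkeeping conversions then yield the first two lines of \eqref{eq:full-estimates}. From $\scc\geq\sc/(\ord+2)-1$ one has $\scc^{-p}\leq C(\ord)\sc^{-p}$ for each fixed exponent, absorbing the $(\ord+2)^{p}$ factor into $C_{6},C_{7}$; and since $\jcc_{\ell}=\imath^{\ord+1-\ell}\jc_{\ord-\ell}$ is a unimodular relabelling, $|\nn{\jcc}_{\ell,j}-\jcc_{\ell,j}|=|\nn{\jc}_{\ord-\ell,j}-\jc_{\ord-\ell,j}|$, so reindexing $m=\ord-\ell$ turns $\scc^{-\ell-1}$ into $\sc^{m-\ord-1}$, which is exactly the claimed $|\nn{\jc}_{\ell,j}-\jc_{\ell,j}|\leq C_{7}\sc^{\ell-\ord-1}$. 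Collecting constants over the $\np$ jumps produces the stated dependence on $(\ord,\np,J,A,B,R)$.

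For the pointwise estimate I would decompose $\fun=\smooth+\sing$ with $\smooth\in C^{\ord+1}$ and expand the output \eqref{eq:final-approximation} (now with $\ord_{1}=\ord$) as $\nn{\fun}=\frsum[\fun-\nn{\sing}]+\nn{\sing}$, giving
\[
\fun-\nn{\fun}=\bigl(\smooth-\frsum[\smooth]\bigr)+\bigl(\id-\mathfrak{F}_{\sc}\bigr)\bigl(\sing-\nn{\sing}\bigr).
\]
The first term is the classical truncation error of a $C^{\ord+1}$ function, hence $O(\sc^{-\ord-1})$ by \eqref{eq:best-approximation-smooth}. For the second I would use the explicit coefficients \eqref{eq:singular-fourier-explicit}: the order-$\ell$ contribution to $\fc(\sing-\nn{\sing})$ is bounded by $|k|^{-\ell-1}|\jc_{\ell,j}-\nn{\jc}_{\ell,j}|+|k|^{-\ell}A^{*}|\jp_{j}-\nn{\jp}_{j}|$. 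For $\ell\geq1$ the first piece sums to $\sum_{|k|>\sc}|k|^{-\ell-1}\sc^{\ell-\ord-1}\asymp\sc^{-\ord-1}$ per order, and the displacement piece is even smaller for $\ell\geq2$, so these terms are all $O(\sc^{-\ord-1})$.

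The main obstacle is the low-order contributions, namely the coefficient error at $\ell=0$ and the jump-displacement terms at $\ell\in\{0,1\}$, where $\fc$ decays only like $|k|^{-1}$ or $|k|^{-2}$ and the naive $\ell^{1}$ tail bound diverges. Here $\sing-\nn{\sing}$ contains differences of sawtooth ($B_{1}$) and parabolic ($B_{2}$) Bernoulli terms with nearby parameters; on a region bounded away from the jumps such a difference reduces to a constant of size $O(\sc^{-\ord-1})$ plus a correction supported on an interval of width $O(\sc^{-\ord-2})$, and its truncation tail decays like $\sc^{-1}\cdot\mathrm{dist}^{-1}$ but remains $O(1)$ across the discontinuity. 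I expect this refined pointwise truncation analysis near the jumps---rather than the mechanical reduction above---to be the delicate part; it can be carried out exactly as in \cite{batyomAlgFourier}, now feeding in the full-order parameter estimates in place of the half-order ones, which also explains the ``jump-free region'' caveat inherited from \prettyref{thm:half-accuracy}.
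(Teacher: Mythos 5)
Your proposal follows essentially the same route as the paper's own proof: invoke the localization result of \cite{batyomAlgFourier} (Theorem 5.2 there) to verify the single-jump a-priori bounds with $A^{*}=A$, $B^{*}=B$, $R^{*}=R'$, apply \prettyref{thm:jump-accuracy-new} to each $\fun_{j}$, convert $\scc$ back to $\sc$ and undo the relabelling $\jcc_{\ell}=\imath^{\ord+1-\ell}\jc_{\ord-\ell,j}$, and obtain the pointwise bound by repeating the proof of Theorem 6.1 of \cite{batyomAlgFourier}. Your version is correct and in fact supplies details the paper leaves implicit, notably the verification that the half-order localization accuracy $O\left(\sc^{-\lfloor\ord/2\rfloor-2}\right)=o\left(\scc^{-1}\right)$ suffices to disambiguate the $\scc$-th root extraction, and the identification of the low-order Bernoulli terms as the delicate part of the pointwise estimate.
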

\begin{proof}
By Theorem 5.2 of \cite{batyomAlgFourier}, the Fourier coefficients
of the localized functions $f_{j}$ have error bounded by $R^{'}k^{-\ord-2}$
where the constant $R^{'}$ depends in general on all the a-priori
bounds, but not on $\sc$. Therefore the a-priori bounds required
by \prettyref{thm:jump-accuracy-new} are satisfied by $R^{*}=R',\; A^{*}=A$
and $B^{*}=B$. Therefore, the estimates of \prettyref{thm:jump-accuracy-new}
hold for each discontinuity $j=1,\dots,\np$. After substituting $\sc=\left(\ord+2\right)\scc$
and $\jcc_{\ell}=\imath^{\ord+1-\ell}\jc_{\ord-\ell,j}$, we get the
first two lines of \eqref{eq:full-estimates}. To get the pointwise
estimate $\left|\nn{\fun}\left(x\right)-\fun\left(x\right)\right|$,
just repeat the proof of Theorem 6.1 of \cite{batyomAlgFourier} \emph{verbatim}. 
\end{proof}
The remainder of this section is devoted to proving \prettyref{thm:jump-accuracy-new}.

Let us first define an auxiliary polynomial sequence.
\begin{defn}
For all $i,\ord$ nonnegative integers let
\[
s_{i}^{\ord}\left(w\right)\isdef\sum_{j=0}^{\ord+1}\left(-1\right)^{j}{\ord+1 \choose j}\left(j+1\right)^{i}w^{\ord+1-j}.
\]
\end{defn}
\begin{prop}
Let $w=\frac{u}{z}$ (recall that $z=\w^{\scc}$). Then 
\begin{equation}
p_{\scc}^{\ord}\left(u\right)=z^{\ord+2}\sum_{i=0}^{\ord}\jcc_{i}\scc^{i}s_{i}^{\ord}\left(w\right).\label{eq:p-decomposition}
\end{equation}
\end{prop}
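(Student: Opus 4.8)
The plan is to prove \eqref{eq:p-decomposition} by direct substitution and rearrangement of finite sums. Both sides are explicit polynomials in $u$ (equivalently in $w$) whose coefficients are built from the same jump data $\left\{\jcc_i\right\}$, so no analytic or limiting argument is needed; the whole statement is an algebraic identity.

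I would begin from the definition \eqref{eq:pnd-def} of $p_{\scc}^{\ord}\left(u\right)$ and insert the closed form \eqref{eq:mjN} for its coefficients, namely $\meas[\left(j+1\right)\scc]=z^{j+1}\sum_{\ell=0}^{\ord}\jcc_{\ell}\left(j+1\right)^{\ell}\scc^{\ell}$. Then, using $w=u/z$ in the form $u=wz$, I would replace $u^{\ord+1-j}=w^{\ord+1-j}z^{\ord+1-j}$ inside each term. The decisive bookkeeping step is to combine the two powers of $z$ attached to the $j$-th summand: the factor $z^{j+1}$ coming from $\meas[\left(j+1\right)\scc]$ and the factor $z^{\ord+1-j}$ coming from $u^{\ord+1-j}$ multiply to the $j$-independent power $z^{\ord+2}$. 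This uniform prefactor can then be pulled outside the entire sum.

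Having extracted $z^{\ord+2}$, I would interchange the two finite summations over $j$ and $\ell$ (legitimate since both ranges are finite) and factor $\jcc_{\ell}\scc^{\ell}$ out of the inner $j$-sum. What remains inside is precisely $\sum_{j=0}^{\ord+1}\left(-1\right)^{j}{\ord+1 \choose j}\left(j+1\right)^{\ell}w^{\ord+1-j}$, which is $s_{\ell}^{\ord}\left(w\right)$ by its defining formula. Relabelling the index $\ell$ as $i$ then yields \eqref{eq:p-decomposition} verbatim.

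The computation is entirely mechanical, and the only point demanding any attention is the cancellation of the $j$-dependence in the exponent of $z$, which is exactly what produces the common factor $z^{\ord+2}$ shared by all terms; beyond this bookkeeping there is no genuine obstacle. It is worth noting that this identity is the structural heart of the later accuracy analysis: the explicit dependence $\scc^{i}s_{i}^{\ord}\left(w\right)$ isolates how each jump magnitude $\jcc_i$ scales with $\scc$, and the factorization of $p_{\scc}^{\ord}$ through the fixed polynomials $s_{i}^{\ord}$ (independent of $\scc$) is what will make the perturbation estimate for the root $w=1$ tractable.
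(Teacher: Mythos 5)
Your proposal is correct and follows essentially the same route as the paper's own proof: substitute $u=zw$ and the closed form \eqref{eq:mjN} into \eqref{eq:pnd-def}, combine $z^{j+1}\cdot z^{\ord+1-j}=z^{\ord+2}$ into a $j$-independent prefactor, interchange the finite sums, and identify the inner sum as $s_{i}^{\ord}\left(w\right)$. There is nothing to add or correct.
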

\begin{proof}
By \eqref{eq:mjN} and \eqref{eq:pnd-def} we have 
\begin{eqnarray*}
p_{\scc}^{\ord}\left(zw\right) & = & \sum_{j=0}^{\ord+1}\left(-1\right)^{j}{\ord+1 \choose j}z^{j+1}\sum_{i=0}^{\ord}\jcc_{i}\left(j+1\right)^{i}\scc^{i}\left(zw\right)^{\ord+1-j}\\
 & = & z^{\ord+2}\sum_{i=0}^{\ord}\jcc_{i}\scc^{i}\sum_{j=0}^{\ord+1}\left(-1\right)^{j}{\ord+1 \choose j}\left(j+1\right)^{i}w^{\ord+1-j}\\
 & = & z^{\ord+2}\sum_{i=0}^{\ord}\jcc_{i}\scc^{i}s_{i}^{\ord}\left(w\right).
\end{eqnarray*}

\end{proof}
The most immediate conclusion of the formula \eqref{eq:p-decomposition}
is that the asymptotic properties of the polynomials $p_{\scc}^{\ord}$
are eventually determined by the corresponding properties of the fixed
polynomial $s_{\ord}^{\ord}$.
\begin{lem}
\label{lem:simple-roots}The polynomial $s_{\ord}^{\ord}\left(w\right)$
is square-free, and all of its roots belong to the interval $[1,+\infty)$.\end{lem}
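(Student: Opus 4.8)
The plan is to pass from the polynomial variable $w$ to a real exponential variable and reduce both assertions to a statement about the real zeros of a single concrete smooth function. Writing $w=\ee^{s}$, the operator $\theta=w\frac{\dd}{\dd w}$ becomes $\frac{\dd}{\dd s}$, and from the defining sum one checks that $s_\ord^\ord(w)=\bigl((\ord+2)-\theta\bigr)^{\ord}(w-1)^{\ord+1}$. Using the conjugation identity $\bigl((\ord+2)-\tfrac{\dd}{\dd s}\bigr)^{\ord}F=(-1)^{\ord}\ee^{(\ord+2)s}\tfrac{\dd^{\ord}}{\dd s^{\ord}}\bigl(\ee^{-(\ord+2)s}F\bigr)$ with $F=(\ee^{s}-1)^{\ord+1}$ and simplifying gives
\[
s_\ord^\ord\bigl(\ee^{s}\bigr)=(-1)^{\ord}\,\ee^{(\ord+2)s}\,g^{(\ord)}(s),\qquad g(s)\isdef \ee^{-s}\bigl(1-\ee^{-s}\bigr)^{\ord+1}.
\]
Since $\ee^{(\ord+2)s}=w^{\ord+2}\neq 0$, for $w>0$ the roots of $s_\ord^\ord$ correspond bijectively and with multiplicity to the real zeros of $g^{(\ord)}$ under $s=\ln w$; note $w\ge 1\iff s\ge 0$. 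So it suffices to prove that $g^{(\ord)}$ has exactly $\ord+1$ real zeros, all simple and all in $[0,\infty)$.

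For the \emph{upper} bound I would expand $g(s)=\sum_{r=1}^{\ord+2}b_r\ee^{-rs}$, whence $g^{(\ord)}(s)=\sum_{r=1}^{\ord+2}(-r)^{\ord}b_r\,\ee^{-rs}$ is a real linear combination of the $\ord+2$ distinct exponentials $\ee^{-s},\dots,\ee^{-(\ord+2)s}$, with every coefficient nonzero. By the exponential analogue of Descartes' rule (proved by dividing out one exponential and applying Rolle, reducing the number of terms by one at each step), such a combination has at most $\ord+1$ real zeros counted with multiplicity.

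For the \emph{lower} bound and the \emph{localization} I would run an iterated Rolle argument on $[0,\infty)$. Since $1-\ee^{-s}\sim s$ near $0$, the function $g$ vanishes to order exactly $\ord+1$ at $s=0$, so $g^{(m)}(0)=0$ for $0\le m\le\ord$; moreover each $g^{(m)}$ tends to $0$ as $s\to+\infty$. I would show by induction that $g^{(m)}$ has at least $m$ distinct zeros in $(0,\infty)$: in passing from $g^{(m)}$ to $g^{(m+1)}$, Rolle yields a zero between each consecutive pair of the known positive zeros $0<t_1<\dots<t_m$ and one more on $(0,t_1)$ (using $g^{(m)}(0)=0$), while $g^{(m)}(t_m)=0$ together with $g^{(m)}(s)\to 0$ forces a nonzero interior extremum, hence a zero of $g^{(m+1)}$, on $(t_m,\infty)$. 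At $m=\ord$ this produces $\ord$ distinct positive zeros, and adjoining the zero at $s=0$ gives $\ord+1$ distinct real zeros in $[0,\infty)$. These saturate the upper bound, so they are all the real zeros and each is simple. Translating back, $s_\ord^\ord$ has degree $\ord+1$ and $\ord+1$ distinct roots, all real and $\ge 1$; hence it is square-free with all roots in $[1,\infty)$.

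The hard part will be the Rolle induction, because it must juggle the two boundary regimes at once: the high-order vanishing of $g$ at $s=0$ (which feeds the left endpoints and accounts for the known root $w=1$) and the decay at $+\infty$ (which supplies the extra zero on each unbounded interval). I would need to verify that the zero counts persist without collision as $m$ grows and that the upper and lower counts match \emph{exactly}, since it is precisely this saturation that delivers simplicity and hence square-freeness. A small but essential check along the way is that none of the coefficients $(-r)^{\ord}b_r$ vanish, so that the sharp count of $\ord+2$ distinct exponentials—and therefore the bound $\ord+1$—is genuinely attained.
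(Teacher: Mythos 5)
Your proof is correct, but it organizes the zero count differently from the paper, and the difference is worth recording. Both arguments start from the same recursion \eqref{eq:sid-recursion}, and your conjugation identity is just the iterated form of \eqref{eq:sid-diff-rel} written in the variable $s=\ln w$. The paper, however, stays with the polynomials $s_{i}^{\ord}$ and inducts on $i$, maintaining a complete root structure at every step ($w=1$ of multiplicity $\ord+1-i$, plus $i$ simple roots greater than $1$); the extra root needed at each step, beyond those supplied by Rolle between consecutive roots, comes from a sign argument: $\frac{\dd}{\dd w}s_{i}^{\ord}>0$ at the largest (simple) root forces, via \eqref{eq:sid-recursion}, $s_{i+1}^{\ord}<0$ there, hence a further root to the right. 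You instead collapse the whole recursion into the single closed form $s_{\ord}^{\ord}\left(\ee^{s}\right)=(-1)^{\ord}\ee^{(\ord+2)s}g^{(\ord)}(s)$ with $g(s)=\ee^{-s}\left(1-\ee^{-s}\right)^{\ord+1}$, and induct on the order of differentiation, tracking only the number of distinct zeros; your extra zero on each unbounded interval comes from the decay of $g^{(m)}$ at $+\infty$ (Rolle on the unbounded interval) rather than from a sign computation. Both mechanisms are sound, and yours is leaner: the two endpoint behaviours of one explicit function do all the work, no computation of $s_{1}^{\ord}$ is needed, and no multiplicity bookkeeping is required beyond the order of vanishing of $g$ at $s=0$. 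Two remarks. First, the exponential Descartes rule is more than you need: since $\deg s_{\ord}^{\ord}=\ord+1$, the $\ord+1$ distinct roots you exhibit in $[1,\infty)$ already exhaust all roots of the polynomial --- which is in fact how your final sentence concludes. Second, the price of the streamlining is the loss of the intermediate statement that $w=1$ is a root of $s_{i}^{\ord}$ of multiplicity $\ord+1-i$; the paper reuses exactly this fact later, in the proof of \prettyref{prop:c9-refinement}, to evaluate $\frac{\dd}{\dd u}p_{\scc}^{\ord}\big|_{u=z}=(\ord+1)!\,\jcc_{\ord}\scc^{\ord}$. In your language this is the assertion that $g^{(i)}$ vanishes at $s=0$ to order exactly $\ord+1-i$, which your argument contains implicitly but would need to state to support that later computation.
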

\begin{proof}
We divide the proof into several steps.
\begin{enumerate}
\item First, notice that we have the following recursion:
\begin{equation}
s_{i+1}^{\ord}\left(w\right)=\left(\ord+2\right)s_{i}^{\ord}\left(w\right)-w\frac{\dd}{\dd w}s{}_{i}^{\ord}\left(w\right).\label{eq:sid-recursion}
\end{equation}

Indeed,{\small{
\begin{eqnarray*}
\left(\ord+2\right)s_{i}^{\ord}\left(w\right)-w\frac{\dd}{\dd w}s{}_{i}^{\ord}\left(w\right) & = & \left(\ord+2\right)\sum_{j=0}^{\ord+1}\left(-1\right)^{j}{\ord+1 \choose j}\left(j+1\right)^{i}w^{\ord+1-j}\\
 &  & -w\sum_{j=0}^{\ord}\left(-1\right)^{j}\left(\ord+1-j\right){\ord+1 \choose j}\left(j+1\right)^{i}w^{\ord+1-j}\\
 & = & \sum_{j=0}^{\ord+1}\left(-1\right)^{j}{\ord+1 \choose j}\left(j+1\right)^{i}w^{\ord+1-j}\left\{ \ord+2-\left(\ord+1-j\right)\right\} \\
 & = & \sum_{j=0}^{\ord+1}\left(-1\right)^{j}{\ord+1 \choose j}\left(j+1\right)^{i+1}w^{\ord+1-j}\\
 & = & s_{i+1}^{\ord}\left(w\right).
\end{eqnarray*}
}}{\small \par}

\item Next, notice that
\begin{equation}
s_{i+1}^{\ord}\left(w\right)=w^{\ord+3}\frac{\dd}{\dd w}\left[\frac{1}{w^{\ord+2}}s_{i}^{\ord}\left(w\right)\right].\label{eq:sid-diff-rel}
\end{equation}

\item By Rolle's theorem applied to \eqref{eq:sid-diff-rel}, we obtain
that there is a root of $s_{i+1}^{\ord}$ between any two consecutive
roots of $s_{i}^{\ord}$. 
\item Direct computation gives for $i=1$
\[
s_{1}^{\ord}\left(w\right)=\left(w-1\right)^{\ord}\left(w-\left(\ord+2\right)\right),
\]
and therefore the biggest root of $s_{1}^{\ord}\left(w\right)$ is
simple. Let us show by induction that this property is preserved for
all $i\geqslant1$. Let $y_{i}>0$ be the biggest root of $s_{i}^{\ord}$,
which is by assumption simple. Since the leading coefficient of $s_{i}^{\ord}$
is positive, we must have that $\frac{\dd}{\dd w}s_{i}^{\ord}\left(w\right)\big|_{w=y_{i}}>0$.
Therefore, by \eqref{eq:sid-recursion} we get $s_{i+1}^{\ord}\left(y_{i}\right)<0$,
so there must be a root of $s_{i+1}^{\ord}$ bigger than $y_{i}$.
By counting roots and using item (3), this new root must be simple.
\item Starting with $s_{0}^{\ord}\left(w\right)=\left(w-1\right)^{\ord+1}$,
all the above implies that $s_{i}^{\ord}$ has exactly $\ord+1$ real
roots, among them $w=1$ with multiplicity $\ord+1-i$ and all the
rest of the roots being simple and bigger than $1$.
\end{enumerate}
The proof is finished by considering the last item for $i=\ord$.
\end{proof}
Recall \prettyref{prop:pnd-exact-root-z}. Let $\left\{ u_{1}^{\left(\scc\right)}=z,\dots,u_{\ord}^{\left(\scc\right)}\right\} $
denote the roots of $p_{\scc}^{\ord}\left(u\right)$, and $\left\{ w_{1}=1,\dots,w_{\ord}\right\} $
denote the roots of $s_{\ord}^{\ord}\left(w\right)$. 
\begin{prop}
\label{prop:separation}The pairwise distances between $\left\{ u_{1}^{\left(\scc\right)},\dots,u_{\ord}^{\left(\scc\right)}\right\} $
remain $O(1)$ as $\scc\to\infty$.\end{prop}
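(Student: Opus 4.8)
The plan is to pass from the roots $u_a^{(\scc)}$ of $p_{\scc}^{\ord}$ to the roots of a suitably normalized polynomial whose coefficients converge to those of the fixed polynomial $s_{\ord}^{\ord}$, and then to invoke a Cauchy-type bound to confine all of them to a single disk of radius $O(1)$. First I would use the decomposition \eqref{eq:p-decomposition}: setting $w=u/z$ and recalling that $z=\w^{\scc}$ has $\left|z\right|=1$, the map $u\mapsto w$ is an isometry, so $\left|u_a^{(\scc)}-u_b^{(\scc)}\right|=\left|w_a-w_b\right|$, where the $w_a$ are the roots of $P_{\scc}(w)\isdef\sum_{i=0}^{\ord}\jcc_i\scc^i s_i^{\ord}(w)$. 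Hence it suffices to show that the roots of $P_{\scc}$ lie in a disk whose radius stays bounded as $\scc\to\infty$.

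Next I would normalize. Dividing $P_{\scc}$ by $\scc^{\ord}$ isolates the dominant term: $\scc^{-\ord}P_{\scc}(w)=\jcc_{\ord}s_{\ord}^{\ord}(w)+\sum_{i=0}^{\ord-1}\jcc_i\scc^{i-\ord}s_i^{\ord}(w)$, where every summand in the remainder carries a factor $\scc^{i-\ord}$ with $i<\ord$, hence is $O(\scc^{-1})$. Each $s_i^{\ord}$ is a \emph{fixed} polynomial of degree $\ord+1$ whose leading coefficient (the $j=0$ term) equals $1$, so $\scc^{-\ord}P_{\scc}$ has degree $\ord+1$ for every $\scc$, with leading coefficient $\sum_{i=0}^{\ord}\jcc_i\scc^{i-\ord}=\jcc_{\ord}+O(\scc^{-1})$. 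The crucial point is that $\jcc_{\ord}=\imath\jc_0$, so by the a-priori lower bound in \eqref{eq:single-jump-apriori-bounds} we have $\left|\jcc_{\ord}\right|=\left|\jc_0\right|\geq B^*>0$; thus for $\scc$ large enough the leading coefficient has modulus at least $B^*/2$ and the degree does not drop.

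Finally I would read off a uniform root bound. All non-leading coefficients of $\scc^{-\ord}P_{\scc}$ are bounded in modulus by $A^*\cdot C(\ord)$, using $\left|\jcc_i\right|=\left|\jc_{\ord-i}\right|\leq A^*$ together with the fixed integer coefficients of the $s_i^{\ord}$, while the leading coefficient is at least $B^*/2$. The Cauchy bound for polynomial roots then places every $w_a$ inside $\left|w\right|\leq 1+2C(\ord)A^*/B^*$, a radius depending only on $\ord,A^*,B^*$ and not on $\scc$. Transferring back through the isometry $u=zw$ yields $\left|u_a^{(\scc)}-u_b^{(\scc)}\right|\leq 2\bigl(1+2C(\ord)A^*/B^*\bigr)=O(1)$, as claimed. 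I expect the only real obstacle to be the one already flagged: guaranteeing that no root escapes to infinity, i.e. that the effective degree stays at $\ord+1$ in the limit. This is precisely what the lower bound $\left|\jc_0\right|\geq B^*$ secures; without it the leading coefficient $\jcc_{\ord}$ could vanish asymptotically and a root would diverge, breaking the $O(1)$ bound. As a bonus, since $\scc^{-\ord}P_{\scc}\to\jcc_{\ord}s_{\ord}^{\ord}$ coefficientwise with stable degree, continuity of roots shows that the $w_a$ in fact converge to the roots $\left\{w_1=1,\dots,w_{\ord}\right\}$ of $s_{\ord}^{\ord}$ (a finite set contained in $[1,+\infty)$ by \prettyref{lem:simple-roots}), which pins down the limiting configuration precisely.
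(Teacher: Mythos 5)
Your proposal is correct, and it in fact contains two distinct arguments. Your main line --- dividing out the rotation $z$ (an isometry, since $\left|z\right|=1$), normalizing by $N^{-d}$, checking that the leading coefficient is $\alpha_{d}+O\left(N^{-1}\right)$ with $\left|\alpha_{d}\right|=\left|a_{0}\right|\geqslant B^{*}>0$, and invoking the Cauchy root bound --- is genuinely different from the paper's proof, and it buys something the paper does not state: an explicit disk of radius $1+2C\left(d\right)A^{*}/B^{*}$, independent of $N$, containing all the rotated roots of $p_{N}^{d}$, hence a quantitative upper bound on the pairwise distances with constants depending only on $d$, $A^{*}$, $B^{*}$. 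The paper's proof is exactly what you relegate to a closing ``bonus'': by Rouch\'e's theorem (your ``continuity of roots'') the roots of $N^{-d}p_{N}^{d}\left(zw\right)$ converge to the fixed roots of $s_{d}^{d}$, which are real and simple by \prettyref{lem:simple-roots}, so the pairwise distances converge to fixed positive values.

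One caution about emphasis, which would become a genuine gap if the bonus were dropped. Read literally, ``remain $O(1)$'' is an upper bound, and your Cauchy argument proves it. But look at how the proposition is used in the proof of \prettyref{lem:accuracy-root}: the $u_{i}^{\left(N\right)}$ are said to be ``$O(1)$-separated'', so that for $N$ large the radius $C_{9}\frac{R^{*}}{B^{*}}N^{-d-1}$ is smaller than the \emph{minimal} separation distance. What is actually needed downstream is a positive lower bound on the pairwise distances, uniform in $N$, and no bound on the moduli of the roots can deliver that. It is precisely the convergence of the roots to the \emph{distinct} roots of the square-free polynomial $s_{d}^{d}$ --- your bonus, the paper's entire proof --- that secures it; one should add the sentence that, the limits being distinct, for $N$ large the minimal pairwise distance among the $u_{i}^{\left(N\right)}$ exceeds half the minimal gap among the roots of $s_{d}^{d}$. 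So promote the bonus to the main argument. On the plus side, your explicit observation that the lower bound $\left|a_{0}\right|\geqslant B^{*}$ from \eqref{eq:single-jump-apriori-bounds} is what keeps the degree from dropping (no root escaping to infinity) is also required for the Rouch\'e step; the paper uses this only implicitly, and making it explicit is an improvement in precision.
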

\begin{proof}
Consider the decomposition \eqref{eq:p-decomposition}. By \prettyref{lem:simple-roots},
$\left\{ w_{1},\dots,w_{\ord}\right\} $ are positive, real and simple
roots of $s_{\ord}^{\ord}\left(w\right)$. By Rouche's theorem, as
$\scc\to\infty$ the roots of $\frac{1}{\scc^{\ord}}p_{\scc}^{\ord}\left(zw\right)$
converge to $\left\{ w_{1},\dots,w_{\ord}\right\} $. Obviously the
polynomials $\frac{1}{\scc^{\ord}}p_{\scc}^{\ord}\left(zw\right)$
and $p_{\scc}^{\ord}\left(zw\right)$ have the same roots, therefore
$\left\{ u_{1}^{\left(\scc\right)},\dots,u_{\ord}^{\left(\scc\right)}\right\} $
also converge to $\left\{ w_{1},\dots,w_{\ord}\right\} $. Since the
pairwise distances between the fixed numbers $\left\{ w_{1},\dots,w_{\ord}\right\} $
do not depend on $\scc$, this finishes the proof.
\end{proof}
Now we can estimate the deviation of the roots of $q_{\scc}^{\ord}$
from $\left\{ u_{1}^{\left(\scc\right)},\dots,u_{\ord}^{\left(\scc\right)}\right\} $. 
\begin{lem}
\label{lem:accuracy-root}Denote by $\left\{ y_{1}^{\left(\scc\right)},\dots,y_{\ord}^{\left(\scc\right)}\right\} $
the roots of $q_{\scc}^{\ord}$, and assume the a-priori bounds of
\prettyref{thm:jump-accuracy-new}. Then there exists $C_{9}=C_{9}\left(\ord\right)$
such that for $\scc\gg1$ and for $j=1,2,\dots,\ord$
\[
\left|y_{j}^{\left(\scc\right)}-u_{j}^{\left(\scc\right)}\right|\leqslant C_{9}\frac{R^{*}}{B^{*}}\scc^{-\ord-1}.
\]
\end{lem}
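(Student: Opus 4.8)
The plan is to reduce the statement to a perturbation estimate for the single fixed polynomial $s_\ord^\ord$, using the decomposition \eqref{eq:p-decomposition}. First I would observe that $q_\scc^\ord$ and $p_\scc^\ord$ differ only through their coefficients: subtracting their definitions and invoking \eqref{eq:noisy-measurements},
\[
q_\scc^\ord\left(u\right)-p_\scc^\ord\left(u\right)=\sum_{\nu=0}^{\ord+1}\left(-1\right)^\nu{\ord+1 \choose \nu}\er[(\nu+1)\scc]\,u^{\ord+1-\nu},\qquad\left|\er[(\nu+1)\scc]\right|\leq\frac{R^*}{\scc}.
\]
Passing to the rescaled variable $w=u/z$ as in \prettyref{prop:separation}, I would set $P_\scc\left(w\right):=\scc^{-\ord}z^{-\ord-2}p_\scc^\ord\left(zw\right)$ and $Q_\scc\left(w\right):=\scc^{-\ord}z^{-\ord-2}q_\scc^\ord\left(zw\right)$, whose roots are exactly $u_j^{\left(\scc\right)}/z$ and $y_j^{\left(\scc\right)}/z$. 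By \eqref{eq:p-decomposition} we have $P_\scc\left(w\right)=\sum_{i=0}^{\ord}\jcc_i\scc^{i-\ord}s_i^\ord\left(w\right)=\jcc_\ord s_\ord^\ord\left(w\right)+O\left(\scc^{-1}\right)$ uniformly on compact sets, the error being controlled by $A^*$ since $\left|\jcc_i\right|=\left|\jc_{\ord-i}\right|\leq A^*$. Because $\left|z\right|=\left|\w^\scc\right|=1$, it suffices to bound the root displacement in the $w$-variable and multiply back by $z$.

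Next I would quantify the two ingredients of a simple-root perturbation bound. For the size of the perturbation, using $\left|z\right|=1$ one gets for $w$ in any fixed compact set
\[
\left|Q_\scc\left(w\right)-P_\scc\left(w\right)\right|\leq\scc^{-\ord}\sum_{\nu=0}^{\ord+1}{\ord+1 \choose \nu}\left|\er[(\nu+1)\scc]\right|\left|w\right|^{\ord+1-\nu}\leq C\left(\ord\right)R^*\scc^{-\ord-1}.
\]
For the derivative, \prettyref{lem:simple-roots} tells us that every root $w_j$ of $s_\ord^\ord$ is simple, hence $\left(s_\ord^\ord\right)'\left(w_j\right)\neq0$; combined with $\left|\jcc_\ord\right|=\left|\jc_0\right|\geq B^*$ and the uniform convergence $P_\scc\to\jcc_\ord s_\ord^\ord$ of both the polynomials and their derivatives, this yields, for $\scc$ large enough and $w$ in a small fixed disk $D_j$ about $w_j$, the lower bound $\left|P_\scc'\left(w\right)\right|\geq c\left(\ord\right)B^*$. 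The threshold on $\scc$ may depend on $A^*/B^*$, but the constant $c\left(\ord\right)$ does not — this is precisely why the final $C_9$ depends only on $\ord$.

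With these estimates in hand the conclusion is a standard Rouch\'e argument applied root by root. By \prettyref{prop:separation}, read in the $w$-variable, the roots $u_j^{\left(\scc\right)}/z$ are $O\left(1\right)$-separated and, for $\scc\gg1$, each lies in its own disk $D_j$ as the unique root of $P_\scc$ there. On the circle of radius $r_\scc:=\left(2C\left(\ord\right)/c\left(\ord\right)\right)\left(R^*/B^*\right)\scc^{-\ord-1}$ centred at $u_j^{\left(\scc\right)}/z$ one has, since $P_\scc$ vanishes at the centre and $\left|P_\scc'\right|\geq c\left(\ord\right)B^*$ nearby, the inequality $\left|P_\scc\right|\geq\tfrac12 c\left(\ord\right)B^*r_\scc>\left|Q_\scc-P_\scc\right|$, so Rouch\'e places exactly one root $y_j^{\left(\scc\right)}/z$ of $Q_\scc$ inside. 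Multiplying back by $z$ (of modulus $1$) gives $\left|y_j^{\left(\scc\right)}-u_j^{\left(\scc\right)}\right|\leq r_\scc=C_9\left(\ord\right)\left(R^*/B^*\right)\scc^{-\ord-1}$, and the $O\left(1\right)$ separation makes the pairing $y_j\leftrightarrow u_j$ unambiguous.

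The step I expect to be the main obstacle is the uniform-in-$\scc$ derivative lower bound $\left|P_\scc'\right|\geq c\left(\ord\right)B^*$ with a constant depending only on $\ord$: it is here that both the square-freeness of $s_\ord^\ord$ from \prettyref{lem:simple-roots} and the non-degeneracy hypothesis $\left|\jc_0\right|\geq B^*$ enter decisively, and one must verify carefully that the lower-order terms $\jcc_i\scc^{i-\ord}s_i^\ord$ (whose magnitude is governed by $A^*$) are absorbed into the threshold on $\scc$ rather than into $C_9$. The remaining manipulations are routine bookkeeping.
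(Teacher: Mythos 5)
Your proposal is correct and takes essentially the same route as the paper's own proof: both rely on the decomposition \eqref{eq:p-decomposition}, the simplicity of the roots of $s_{\ord}^{\ord}$ (via \prettyref{lem:simple-roots}) to get a first-derivative lower bound proportional to $B^{*}$, the coefficient perturbation bound $R^{*}\scc^{-1}$ from \eqref{eq:noisy-measurements}, and a Rouch\'e argument on circles of radius $O\left(\frac{R^{*}}{B^{*}}\scc^{-\ord-1}\right)$, with \prettyref{prop:separation} guaranteeing the disks stay disjoint. Your only deviation --- passing to the rescaled variable $w=u/z$ and normalizing by $\scc^{-\ord}$ before estimating --- is cosmetic, since the paper carries out the identical Taylor-plus-Rouch\'e estimate directly in the $u$-variable.
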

\begin{proof}
The proof is based on the application of Rouche's theorem. Using the
decomposition \eqref{eq:p-decomposition} and \prettyref{lem:simple-roots},
we have that for $\scc\gg1$ 
\begin{equation}
\left|\frac{\dd}{\dd u}p_{\scc}^{\ord}\left(u\right)\bigl|_{u=u_{i}^{\left(\scc\right)}}\right|\approx\left|\jcc_{\ord}\right|\scc^{\ord},\qquad i=1,2,\dots,\ord.\label{eq:large-derivative}
\end{equation}

In particular, this means that there exists a constant $C_{10}=C_{10}\left(\ord\right)$
such that for all $i=1,2,\dots,\ord$ and $\scc\gg1$
\begin{equation}
\left|\frac{\dd}{\dd u}p_{\scc}^{\ord}\left(u\right)\bigl|{}_{u=u_{i}^{\left(\scc\right)}}\right|\geqslant C_{10}B^{*}\scc^{\ord}.\label{eq:first-der-big}
\end{equation}
Again, from \eqref{eq:p-decomposition} it is easy to see that for
$\scc\gg1$, the high-order derivatives of $p_{\scc}^{\ord}$ at $u_{i}^{\left(\scc\right)}$
can be uniformly bounded by an estimate of the form

\[
\left|\frac{\dd^{k}}{\dd u^{k}}p_{\scc}^{\ord}\left(u\right)\bigl|_{u=u_{i}^{\left(\scc\right)}}\right|\leqslant C_{11}A^{*}\scc^{\ord},\quad k=2,\dots\ord.
\]
for some constant $C_{11}=C_{11}\left(\ord\right)$.

Next we take disks of radius $\eta\left(\scc\right)=C_{9}\frac{R^{*}}{B^{*}}\scc^{-\ord-1}$
around each root $u_{i}^{\left(\scc\right)}$, where $C_{9}$ is to
be determined. Let us fix $1\leqslant i\leqslant\ord$, and consider
the circles
\[
\gamma_{i}^{\left(\scc\right)}=\left\{ t_{\phi}=u_{i}^{\left(\scc\right)}+\eta\left(\scc\right)\ee^{\imath\phi},\;0\leqslant\phi<2\pi\right\} .
\]
By the Taylor formula we have for each $t_{\phi}\in\gamma_{i}^{\left(\scc\right)}$
\begin{eqnarray*}
\left|p_{\scc}^{\ord}\left(t_{\phi}\right)\right| & = & \left|\underbrace{p_{\scc}^{\ord}\left(u_{i}^{\left(\scc\right)}\right)}_{=0}+\underbrace{\frac{\dd}{\dd u}p_{\scc}^{\ord}\left(u_{i}^{\left(\scc\right)}\right)}_{\left|\cdot\right|\geqslant C_{10}B^{*}\scc^{\ord},\;\eqref{eq:first-der-big}}\eta\left(\scc\right)\ee^{\imath\phi}+\frac{1}{2}\frac{\dd^{2}}{\dd u^{2}}p_{\scc}^{\ord}\left(u_{i}^{\left(\scc\right)}\right)\eta^{2}\left(\scc\right)\ee^{2\imath\phi}+\dots\right|\\
_{\left(\scc\gg1\right)} & \geqslant & C_{12}B^{*}\eta\left(\scc\right)\scc^{\ord}=C_{12}C_{9}R^{*}\scc^{-1}\qquad\left(C_{12}=2C_{10}\right).
\end{eqnarray*}

Now consider the\emph{ }perturbation polynomial $e_{\scc}^{\ord}\isdef q_{\scc}^{\ord}-p_{\scc}^{\ord}$.
Its coefficients have magnitudes $\left|\nn{\meas[j\scc]}-\meas[j\scc]\right|\leqslant R^{*}\scc^{-1}$.
Therefore
\[
\left|e_{\scc}^{\ord}\left(t_{\phi}\right)\right|\leqslant C_{13}R^{*}\scc^{-1}.
\]
Note that for $\scc\gg1$ the constant $C_{13}$ does not depend on
$C_{9}$ because, say, $\left|t_{\phi}\right|\leqslant2\left|u_{i}^{\left(\scc\right)}\right|<C^{\sharp}\left(\ord\right)$,
an absolute constant.

Consequently, if we choose $C_{9}=2\frac{C_{13}}{C_{12}}>\frac{C_{13}}{C_{12}}$
we can apply Rouche's theorem and conclude that $q_{\scc}^{\ord}$
has a simple zero within distance $C_{9}\frac{R^{*}}{B^{*}}\scc^{-\ord-1}$
from $u_{i}^{\left(\scc\right)}$.

By \prettyref{prop:separation} the $\left\{ u_{i}^{\left(\scc\right)}\right\} $
are $O\left(1\right)$-separated, therefore if $\scc$ is large enough
then the quantity $C_{9}\frac{R^{*}}{B^{*}}\scc^{-\ord-1}$ will be
smaller than the minimal separation distance.\end{proof}
\begin{rem}
\label{rem:all-roots-are-good}This analysis is valid for \emph{any
root} of $q_{\scc}^{\ord}$, not just the perturbation of $u_{1}=z$.
The roots of $q_{\scc}^{\ord}$ all lie approximately on the ray with
angle $\jp\scc$. This means that the parameter $\jp$ can be recovered
with high accuracy from any root of $q_{\scc}^{\ord}\left(u\right)$,
and we expect that it might be important for practice (so for instance
one can approximate $\jp$ by averaging).\end{rem}
\begin{proof}
[Proof of \prettyref{thm:jump-accuracy-new}, first part.]Let us
track steps 2-4 of \prettyref{alg:new-reconstruction-single-jump}.
\begin{itemize}
\item By \prettyref{lem:accuracy-root}, the accuracy of step 2 is bounded
by $C_{9}\frac{R^{*}}{B^{*}}\scc^{-\ord-1}$, i.e. we can write
\[
\nn z_{\scc}=z+\frac{C^{*}\left(\scc\right)}{\scc^{\ord+1}}
\]
where $\left|C^{*}\left(\scc\right)\right|\leqslant C_{9}\frac{R^{*}}{B^{*}}$.
\item Extraction of $\scc^{\text{th}}$ root in step 3 further decreases
the error by the factor $\frac{1}{\scc}$. Indeed, we have
\begin{eqnarray*}
\left|\nn{\w}_{\scc}-\w\right| & = & \left|1-\frac{\nn{\w}_{\scc}}{\w}\right|=\left|1-\left(\frac{\nn z_{\scc}}{z}\right)^{\frac{1}{\scc}}\right|\\
_{\left(\left|C^{**}\left(\scc\right)\right|\leqslant C_{9}\frac{R^{*}}{B^{*}}\right)} & = & \left|1-\left(1+\frac{C^{**}\left(\scc\right)}{\scc^{\ord+1}}\right)^{\frac{1}{\scc}}\right|\\
_{\left(\text{Bernoulli's inequality}\right)} & \leqslant & C_{9}\frac{R^{*}}{B^{*}}\scc^{-\ord-2}.
\end{eqnarray*}

\item Step 4 preserves this estimate, since 
\begin{eqnarray*}
\nn{\w}_{\scc} & = & \w+C_{\$}\left(\scc\right)\frac{R^{*}}{B^{*}}\scc^{-\ord-2},\qquad\left|C_{\$}\left(\scc\right)\right|\leqslant C_{9}\\
\Longrightarrow\left|\nn{\jp}_{\scc}-\jp\right| & = & \left|\log\frac{\nn{\w}_{\scc}}{\w}\right|=\left|\log\left(1+C_{\$\$}\left(\scc\right)\frac{R^{*}}{B^{*}}\scc^{-\ord-2}\right)\right|\qquad\left(\left|C_{\$\$}\left(\scc\right)\right|\leqslant C_{9}\right)\\
 & \leqslant & 2C_{9}\frac{R^{*}}{B^{*}}\scc^{-\ord-2},
\end{eqnarray*}
the last inequality following from the estimate $\left|\log\left(1+\e\right)\right|<2\left|\e\right|$
for $\left|\e\right|\ll1$.
\end{itemize}

The proof of the first part is therefore finished with $C_{4}\isdef2C_{9}$.

\end{proof}
\begin{minipage}[t]{1\columnwidth}%
\end{minipage}
\begin{proof}
[Proof of \prettyref{thm:jump-accuracy-new}, second part.] We have
recovered the approximate value $\widetilde{\w}_{\scc}$ which satisfies
$\left|\widetilde{\w}_{\scc}-\w\right|\leq C_{4}\frac{R^{*}}{B^{*}}\scc^{-\ord-2}$,
while $\left|\apprmeas-\meas\right|\leq R^{*}k^{-1}$. Now we estimate
the corresponding error in the solution of the linear system \eqref{eq:linear-system-perturbed}.

By \eqref{eq:the-linear-system} and \eqref{eq:linear-system-perturbed},
the error vector satisfies
\[
\begin{bmatrix}\widetilde{\jcc}_{0}-\jcc_{0}\\
\widetilde{\jcc}_{1}-\jcc_{1}\\
\vdots\\
\widetilde{\jcc}_{\ord}-\jcc_{\ord}
\end{bmatrix}=\left(V_{\scc}^{\ord}\right)^{-1}\begin{bmatrix}\apprmeas[\scc]\widetilde{\w}_{\scc}^{-\scc}-\meas[\scc]\w^{-\scc}\\
\apprmeas[2\scc]\widetilde{\w}_{\scc}^{-2\scc}-\meas[2\scc]\w^{-2\scc}\\
\vdots\\
\apprmeas[\left(\ord+1\right)\scc]\widetilde{\w}_{\scc}^{-\left(\ord+1\right)\scc}-\meas[\left(\ord+1\right)\scc]\w^{-\left(\ord+1\right)\scc}
\end{bmatrix}.
\]
Since
\begin{eqnarray*}
\left|\meas[j\scc]\right| & \leq & C_{14}A^{*}\scc^{\ord}\\
\nn{\meas[j\scc]} & = & \meas[j\scc]+R\left(\scc\right)\scc^{-1},\qquad R\left(\scc\right)\leqslant R^{*}\\
\nn{\w}_{\scc} & = & \w+C_{\$}\left(\scc\right)\frac{R^{*}}{B^{*}}\scc^{-\ord-2},\qquad C_{\$}\left(\scc\right)\leqslant C_{4}
\end{eqnarray*}
then we have (using standard Taylor majorization techniques, see e.g.
\cite[Proposition A.7]{batyomAlgFourier}) that
\begin{eqnarray*}
\nn{\w}_{\scc}^{-j\scc} & = & \left(\w+C_{4}\left(\scc\right)\frac{R^{*}}{B^{*}}\scc^{-\ord-2}\right)^{-j\scc}\\
 & = & \w^{-j\scc}\left(1+\frac{C_{4}\left(\scc\right)R^{*}}{B^{*}\w}\scc^{-\ord-2}\right)^{-j\scc}\\
 &  & \w^{-j\scc}\left(1-C_{15}\left(\scc\right)\frac{R^{*}}{B^{*}}\scc^{-\ord-1}\right)
\end{eqnarray*}
with $\left|C_{15}\left(\scc\right)\right|\leqslant2C_{9}$, and consequently
\begin{eqnarray}
\begin{split}\left|\apprmeas[j\scc]\widetilde{\w}_{\scc}^{-j\scc}-\meas[j\scc]\w^{-j\scc}\right| & =\left|\left(\meas[j\scc]+R\left(\scc\right)\scc^{-1}\right)\w^{-j\scc}\left(1-C_{15}\left(\scc\right)\frac{R^{*}}{B^{*}}\scc^{-\ord-1}\right)-\meas[j\scc]\w^{-j\scc}\right|\\
 & \leqslant\frac{R^{*}}{\scc}\left|\frac{2C_{9}C_{14}A^{*}}{B^{*}}+1\right|+O\left(\scc^{-\ord-2}\right)\\
 & \leq C_{16}R\left(1+\frac{A^{*}}{B^{*}}\right)\scc^{-1},\qquad C_{16}=\max\left\{ 1,2C_{9}C_{14}\right\} .
\end{split}
\label{eq:rhs-error-linear}
\end{eqnarray}

Denote $\zeta_{j}=\apprmeas[j\scc]\widetilde{\w}_{\scc}^{-j\scc}-\meas[j\scc]\w^{-j\scc}$
and also let $C_{17,\ell}$ be an upper bound on the sum of absolute
values of the entries in the $\ell$-th row of $\left(V_{1}^{\ord}\right)^{-1}$.
It is immediate that
\[
V_{\scc}^{\ord}=V_{1}^{\ord}\diag\left\{ 1,\scc,\dots,\scc^{\ord}\right\} ,
\]
therefore
\[
\begin{bmatrix}\widetilde{\jcc}_{0}-\jcc_{0}\\
\widetilde{\jcc}_{1}-\jcc_{1}\\
\vdots\\
\widetilde{\jcc}_{\ord}-\jcc_{\ord}
\end{bmatrix}=\begin{bmatrix}1\\
 & \scc^{-1}\\
 &  & \ddots\\
 &  &  & \scc^{-\ord}
\end{bmatrix}\left(V_{1}^{\ord}\right)^{-1}\begin{bmatrix}\zeta_{0}\\
\zeta_{1}\\
\vdots\\
\zeta_{\ord}
\end{bmatrix}.
\]
Using the estimate \eqref{eq:rhs-error-linear} we have immediately
that for $\ell=0,1,\dots,\ord$
\[
\left|\widetilde{\jcc}_{\ell}-\jcc_{\ell}\right|\leqslant C_{5,\ell}R^{*}\left(1+\frac{A^{*}}{B^{*}}\right)\scc^{-\ell-1},
\]
where $C_{5,\ell}\isdef C_{16}C_{17,\ell}$. This completes the proof
of \prettyref{thm:jump-accuracy-new}.
\end{proof}

\section{\label{sec:numerics}Numerical experiments}

In our numerical experiments we compared the performance of the following
Eckhoff-based methods for recovery of a single jump point position
from the first $\sc$ Fourier coefficients: original Eckhoff's formulation
from \cite{eckhoff1995arf} (\noun{Eckhoff}); our previous method
from \cite{batyomAlgFourier} (\noun{BY 2011}); the method presented
in this paper (\noun{Full}). All the three methods in essense solve
a polynomial equation $p_{\sc}\left(u\right)=0$ satisfied by the
jump point $\w=\ee^{-\imath\jp}$: for \noun{Eckhoff} and \noun{BY 2011}
this polynomial is constructed from consecutive samples $k=\sc-\ord-1,\dots,\sc$
(\prettyref{alg:half-algo}), while \noun{Full} uses the decimated
sequence $k=\scc,2\scc,\dots,,\left(\ord+2\right)\scc$. The only
difference between \noun{Eckhoff} and \noun{BY 2011} is the degree
of $p_{\sc}\left(u\right)$: the former uses the full smoothness $\ord$
while the latter uses $\ord_{1}=\left\lfloor \frac{\ord}{2}\right\rfloor $.
The jump point $\jp$ and the magnitudes, as well as the error terms,
are randomly chosen in the beginning of the whole experiment.

All calculations were done using Mathematica software with high-precision
setting. The results, presented in \prettyref{fig:experiments}, agree
well with the theory - \noun{Full} presents an improvement of $\sim\sc^{-\ord/2}$
compared to \noun{BY 2011}, and improvement of order $\sc^{-\ord}$
compared to \noun{Eckhoff}.

\begin{figure}
\subfloat[Order=3]{\includegraphics[angle=90,width=0.4\linewidth]{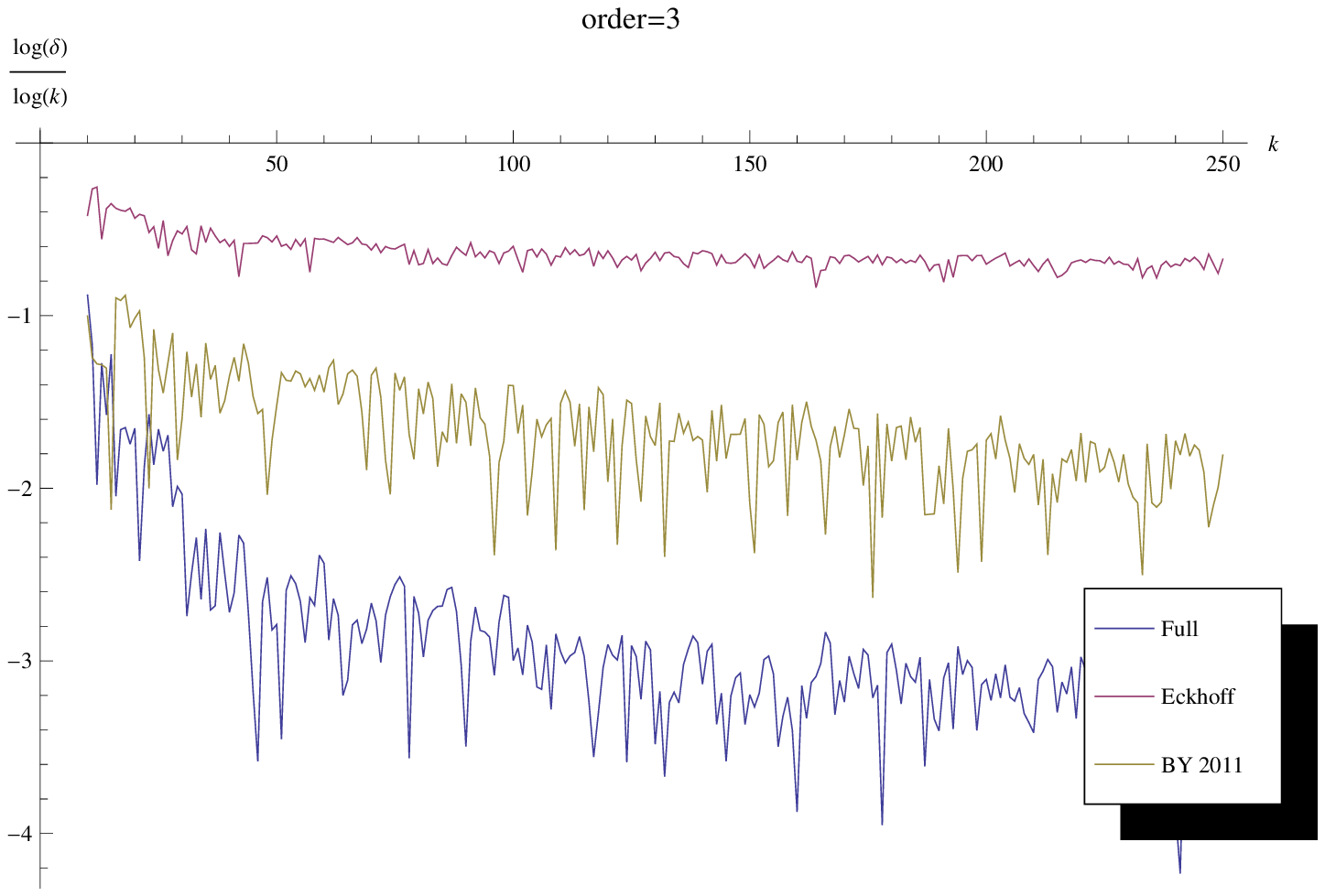}

}\hspace{1cm}\subfloat[Order=4]{\includegraphics[angle=90,width=0.4\linewidth]{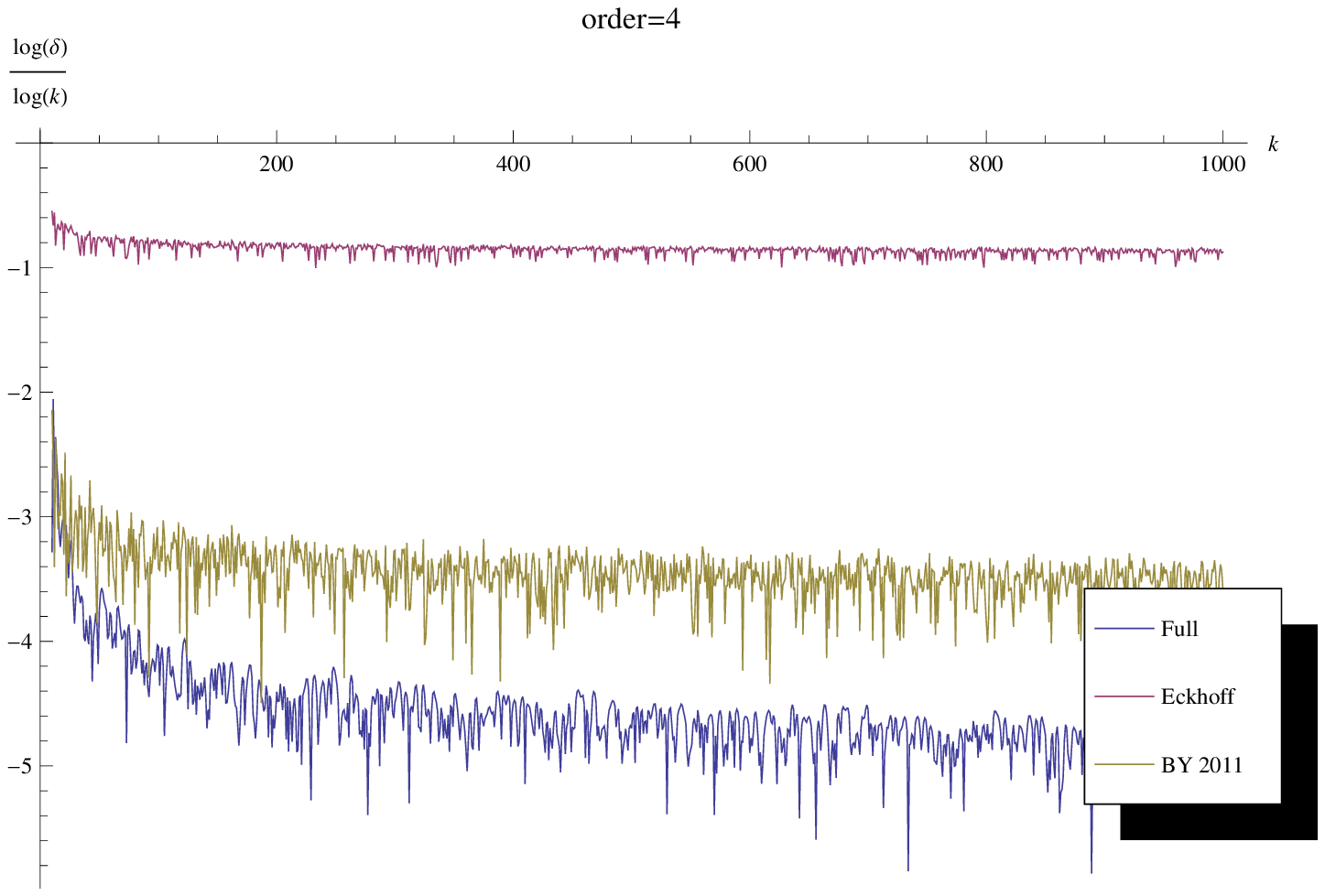}

}

\subfloat[Order=5]{\includegraphics[clip,angle=90,width=0.4\linewidth]{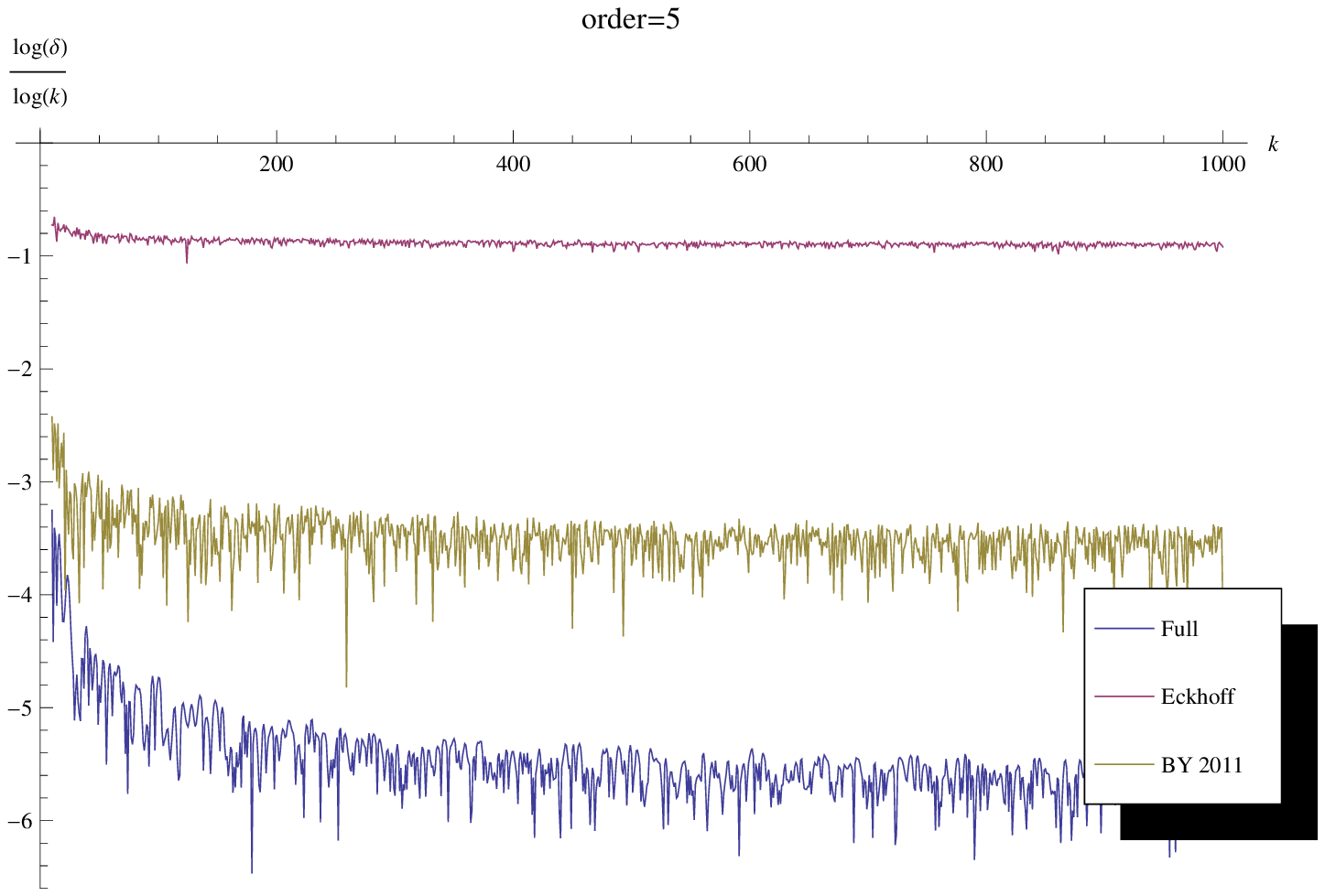}

}\hspace{1cm}\subfloat[Order=7]{\includegraphics[clip,angle=90,width=0.4\linewidth]{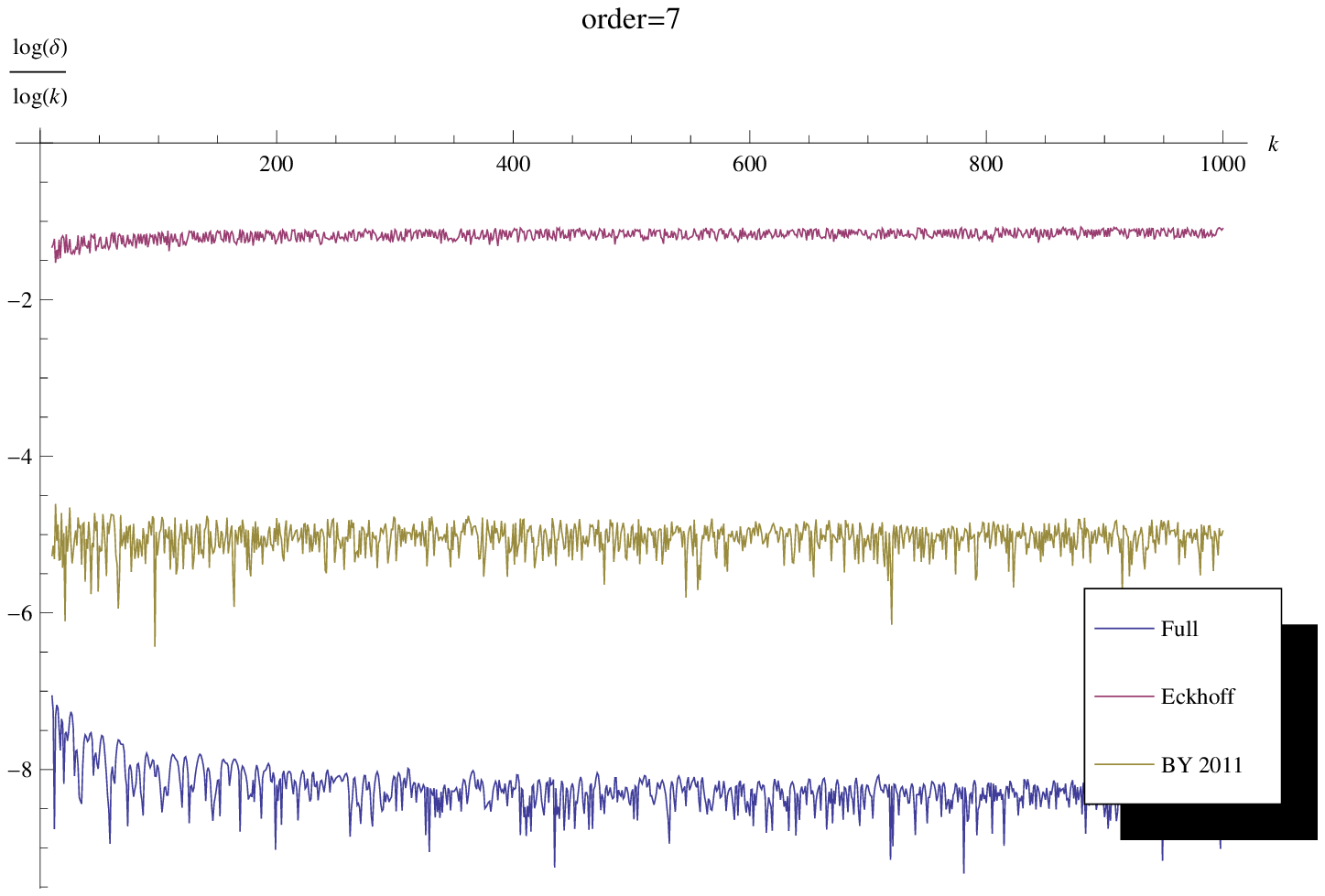}

}

\caption{{\footnotesize{\protect\noun{{\footnotesize{Full}}} represents the
algorithm of this paper, \protect\noun{{\footnotesize{BY 2011}}}
referes to the method of \cite{batyomAlgFourier} while \protect\noun{{\footnotesize{Eckhoff}}}
denotes the original method of Eckhoff from \cite{eckhoff1995arf}.
The $x$ axis shows the index $k$ used for the reconstruction, corresponding
to the number $\sc$ in the text. The $y$ axis shows the ratio $\frac{\log\delta}{\log k}$,
where $\delta$ is the reconstruction error exhibited by the algorithms.}}}
\label{fig:experiments}
\end{figure}

\section{\label{sec:optimality}Practical aspects of algebraic reconstruction}

\global\long\def\pm{\mathcal{P}}
\global\long\def\jac{\mathcal{J}}
\global\long\def\ordd{\tilde{\ord}}

\subsection{Stability of the algorithm and Prony-type systems}

The optimality, or efficiency, of the proposed algorithm remains an
important practical issue. It is immediately seen that our algorithm
is stable with respect to perturbations in the Fourier coefficients
$\fc\left(\fun\right)$ \emph{of the order $O\left(k^{-\ord-2}\right)$
}(since such perturbations will just be absorbed into the constant
$R$ appearing in \eqref{eq:decimated-system-single-jump}). This
means, however, that the higher coefficients need to be acquired with
increasing accuracy, which might very well be impossible in practice.
While best possible asymptotic rate of convergence is achieved, it
comes at the cost of high-precision computations and a large number
of required Fourier coefficients (see e.g. experiments on localization
procedure in \cite{batyomAlgFourier} where convergence starts with
large $\sc$). So in terms of \emph{actual performance,} the ``decimated
Eckhoff algorithm'' is probably not the best currently available
method for jump detection\emph{ }in real-world scenarios\emph{. }For
this reason, at this stage we do not attempt to compare its performance
to well-known methods such as concentration kernels. Instead, in this
section we briefly discuss the question of best \emph{absolute} performance
of \emph{any method whatsoever.}

Consider the Eckhoff's problem without reference to any concrete method.
A formulation which might be more suitable for practical applications
is the following.
\begin{problem}
\label{prob:best-abs-error}Given first $\sc$ Fourier coefficients
of $\fun\in PC\left(\ord+1,\np\right)$, possibly with some perturbations
bounded by $\leqslant\delta$, find the points of discontinuity of
$\fun$ with \emph{smallest absolute error}.
\end{problem}
The problem is that, as far as we are aware, even the question of
determining \emph{what the smallest absolute error actually is}, remains
open. Motivated by this question, we have started investigating the
so-called ``Prony type'' systems%
\footnote{These systems are important in many problems of mathematics and engineering
\cite{Auton1981}. They have been used as far back as by Baron de
Prony in 1795 \cite{prony1795essai} for the problem of exponential
fitting.%
} (of which \eqref{eq:eckhoff-system} is a special case), in particular
lower bounds for their solution. Let us now briefly discuss the relevant
results of \cite{batenkov2011accuracy,batPronyDec} in the context
of Eckhoff's problem.

Consider the following ``polynomial Prony'' system of equations:

\begin{equation}
\meas=\sum_{j=1}^{\np}z_{j}^{k}\sum_{\ell=0}^{\ell_{j}-1}\jc_{\ell,j}k^{\ell},\quad\left|z_{j}\right|=1,\; a_{\ell,j}\in\complexfield,\;\sum_{j=1}^{\np}\ell_{j}=C.\label{eq:polynomial-prony}
\end{equation}
Denote the overall number of unknown by $R\isdef C+\np$. Assume that
we are given the measurement sequence $\left\{ \meas\right\} _{k=0,1,\dots,\sc}$.
Choose an index set $S\subset\left\{ 0,1,\dots,\sc\right\} $ of size
exactly $R$. This defines the so-called ``Prony map'' $\pm:\complexfield^{R}\to\complexfield^{R}$,
which maps the parameters $\left\{ z_{j},a_{\ell,j}\right\} $ to
the measurements $\left\{ \meas\right\} _{k\in S}$. This also defines
the ``reconstruction map'' $\pm^{-1}$, which can be thought of
as representing an ``ideal reconstruction algorithm''. In a small
neighborhood of a regular (i.e. non-critical) point of $\pm$, the
map $\pm^{-1}$ is well-defined and well-approximated by its linear
part, given by the Jacobian matrix $\jac$. Consequently, if the left-hand
side of \eqref{eq:polynomial-prony} is perturbed by a small amount
$\varepsilon\ll1$, then the corresponding perturbation in the values
of $\left\{ z_{j},a_{\ell,j}\right\} $ can be easily bounded by the
sum of the magnitudes of the entries of the corresponing row of $\jac$
times $\varepsilon$.

Let the set $S$ be of the form of an arithmetic progression with
initial value $t$ and step size $\sigma$, i.e.
\begin{equation}
S=\left\{ t,t+\sigma,t+2\sigma,\dots,t+\left(R-1\right)\sigma\right\} .\label{eq:sampling-set-progr}
\end{equation}

Under the above assumptions, in \cite{batenkov2011accuracy,batPronyDec}
we have shown that the error for recovering the jump $z_{j}$ satisfies
\begin{eqnarray}
\left|\Delta z_{j}\right| & \leqslant & \frac{2}{\ell_{j}!}\left(\frac{2}{\delta_{\sigma}}\right)^{R}\frac{1}{\left|\jc_{\ell_{j}-1,j}\right|\sigma^{\ell_{j}}}\varepsilon,\label{eq:poly-node-pert}
\end{eqnarray}
where $\delta_{\sigma}\isdef\min_{i\neq j}\left|z_{i}^{\sigma}-z_{j}^{\sigma}\right|$.
A similar, slightly more involved expression is provided for $\left|\Delta\jc_{\ell,j}\right|$.

Now consider the system \eqref{eq:eckhoff-system}. Multiplying both
sides by $2\pi\left(\imath k\right)^{\ord+1}$, we obtain the system
\eqref{eq:polynomial-prony} with $\ell_{j}=\ord+1$ and perturbation
of size $\varepsilon=O\left(\sc^{-1}\right)$.

Take $S_{1}=\left\{ \sc-\left(\ord+2\right)\np+1,\dots,\sc\right\} $,
which corresponds to the original Eckhoff method of \cite{eckhoff1995arf}.
By \eqref{eq:poly-node-pert} we get $\left|\Delta z_{j}\right|=O\left(\sc^{-1}\right)$,
i.e. only first order accuracy. In contrast, for
\[
S_{2}=\left\{ \left\lfloor \frac{\sc}{\left(\ord+2\right)\np}\right\rfloor ,2\left\lfloor \frac{\sc}{\left(\ord+2\right)\np}\right\rfloor ,\dots,\left(\ord+2\right)\np\left\lfloor \frac{\sc}{\left(\ord+2\right)\np}\right\rfloor \right\} 
\]
we get $\left|\Delta z_{j}\right|=O\left(\sc^{-\left(\ord+2\right)}\right)$,
i.e. maximal possible asymptotic accuracy. Thus, the Prony systems
approach provides another justification for the decimation technique.

But it can provide much more. Indeed, the magnitude of the norm of
the Jacobian (bounded from above by \eqref{eq:poly-node-pert}) provides
by definition \emph{the best possible} \emph{stability bounds} (at
least in the case of small perturbations), and therefore the performance
(including robustness to noise) of all algorithms (strictly speaking,
of those which utilize sampling sets of the form \eqref{eq:sampling-set-progr})
should be compared to these bounds.

To demonstrate this point, consider the decimated Eckhoff algorithm
for one point, i.e. \prettyref{alg:new-reconstruction-single-jump},
for the system \eqref{eq:noisy-measurements}, and its stability as
provided by \prettyref{thm:jump-accuracy-new}. Application of the
bound \eqref{eq:poly-node-pert} to this case gives (here $\delta_{\sigma}$
is effectively equal to 1, and also $R=\left(\ord+2\right),\;\left|\jc_{\ord}\right|>B^{*},\;\left|\varepsilon\right|\leqslant R^{*}\cdot\sc^{-1}$
and $\sigma=\scc=\left\lfloor \frac{\sc}{\left(\ord+2\right)}\right\rfloor $)
\begin{eqnarray}
\left|\Delta z_{j}\right| & \leqslant & \frac{2^{\ord+2}}{\left(\ord+1\right)!}\cdot\frac{1}{\left|\jc_{\ord}\right|\scc^{\ord+1}}\varepsilon<\frac{2^{\ord+2}\left(\ord+2\right)}{\left(\ord+1\right)!}\cdot\frac{R^{*}}{B^{*}}\cdot\scc^{-\ord-2}.\label{eq:deltazj-prony}
\end{eqnarray}

On the other hand, according to the proof of \prettyref{thm:jump-accuracy-new},
we have
\[
\left|\Delta\w\right|\leqslant C_{9}\left(\ord\right)\frac{R^{*}}{B^{*}}\scc^{-\ord-2}.
\]
Thus, it can be said that \prettyref{alg:new-reconstruction-single-jump}
provides qualitatively best performance, as both estimates are proportional
to $\frac{R^{*}}{B^{*}}$. The following calculation provides a simple
estimate of the constant $C_{9}$.
\begin{prop}
\label{prop:c9-refinement}If in step 2 of \prettyref{alg:new-reconstruction-single-jump}
the closest root to the unit circle is chosen, then the constant $C_{9}$
satisfies
\begin{eqnarray}
C_{9} & \leqslant & \frac{3^{\ord+1}}{\left(\ord+1\right)!}.\label{eq:c9-refined}
\end{eqnarray}
\end{prop}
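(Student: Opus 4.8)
The plan is to sharpen, for the single root lying exactly on the unit circle, the two crude estimates that entered the Rouché argument of \prettyref{lem:accuracy-root}. Recall from \prettyref{prop:pnd-exact-root-z} that $u=z=\w^{\scc}$ is an exact root of $p_{\scc}^{\ord}$, and that $|z|=1$; while by \prettyref{lem:simple-roots} together with \prettyref{prop:separation} the remaining roots converge to $zw_i$ with $w_i>1$ real, and hence have modulus bounded away from $1$. Thus the root of $q_{\scc}^{\ord}$ closest to the unit circle is precisely the perturbation of $z$, and it suffices to localize that one root. First I would derive a sharp lower bound for $|\frac{\dd}{\dd u}p_{\scc}^{\ord}(z)|$ and a sharp upper bound for the perturbation $e_{\scc}^{\ord}=q_{\scc}^{\ord}-p_{\scc}^{\ord}$ on a small circle around $z$, and then feed the ratio into Rouché's theorem, exactly as in \prettyref{lem:accuracy-root} but with both constants made explicit.

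For the derivative, I would differentiate the decomposition \eqref{eq:p-decomposition}. Since $w=u/z$, each differentiation in $u$ produces a factor $z^{-1}$, so $\frac{\dd}{\dd u}p_{\scc}^{\ord}(z)=z^{\ord+1}\sum_{i=0}^{\ord}\jcc_i\scc^i(s_i^{\ord})'(1)$. By \prettyref{lem:simple-roots}, $s_i^{\ord}$ vanishes at $w=1$ to order $\ord+1-i$, so $(s_i^{\ord})'(1)=0$ for every $i<\ord$ and only the top term $i=\ord$ survives, giving $\frac{\dd}{\dd u}p_{\scc}^{\ord}(z)=z^{\ord+1}\jcc_{\ord}\scc^{\ord}(s_{\ord}^{\ord})'(1)$. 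To evaluate $(s_{\ord}^{\ord})'(1)$ I would track the leading coefficient $c_i$ in the local expansion $s_i^{\ord}(w)\sim c_i(w-1)^{\ord+1-i}$: starting from $s_0^{\ord}(w)=(w-1)^{\ord+1}$ (so $c_0=1$), the recursion \eqref{eq:sid-recursion} converts into $c_{i+1}=-(\ord+1-i)c_i$, whence $c_{\ord}=(-1)^{\ord}(\ord+1)!$. Since $|\jcc_{\ord}|=|\jc_0|\geq B^*$ and $|z|=1$, this yields the clean lower bound $|\frac{\dd}{\dd u}p_{\scc}^{\ord}(z)|\geq B^*(\ord+1)!\,\scc^{\ord}$. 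The same decomposition shows $|\frac{\dd^k}{\dd u^k}p_{\scc}^{\ord}(z)|=O(A^*\scc^{\ord})$ for $k\geq 2$, so on a circle of radius $\eta=O(\scc^{-\ord-1})$ the higher Taylor terms are negligible and $|p_{\scc}^{\ord}(u)|\geq B^*(\ord+1)!\,\scc^{\ord}\eta(1-o(1))$.

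For the perturbation I would use that on the circle $|u-z|=\eta$ with $\scc\gg1$ one has $|u|\leq 1+\eta\leq 2$, together with $|\apprmeas[(j+1)\scc]-\meas[(j+1)\scc]|=|\er[(j+1)\scc]|\leq R^*\scc^{-1}$. The binomial theorem then collapses the estimate neatly: $|e_{\scc}^{\ord}(u)|\leq R^*\scc^{-1}\sum_{j=0}^{\ord+1}\binom{\ord+1}{j}2^{\ord+1-j}=R^*\scc^{-1}(2+1)^{\ord+1}=R^*3^{\ord+1}\scc^{-1}$. Comparing the two bounds, Rouché's theorem applies as soon as $B^*(\ord+1)!\,\scc^{\ord}\eta>R^*3^{\ord+1}\scc^{-1}$, i.e. for $\eta>\frac{3^{\ord+1}}{(\ord+1)!}\frac{R^*}{B^*}\scc^{-\ord-1}$, which localizes a root of $q_{\scc}^{\ord}$ within that radius of $z$ and gives $C_9\leq\frac{3^{\ord+1}}{(\ord+1)!}$.

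I expect the main obstacle to be the exact evaluation of $(s_{\ord}^{\ord})'(1)$: one must first pin down the precise order of vanishing $\ord+1-i$ of each $s_i^{\ord}$ at $w=1$ (which I would read off from the last item in the proof of \prettyref{lem:simple-roots}) and then convert \eqref{eq:sid-recursion} into the leading-coefficient recursion $c_{i+1}=-(\ord+1-i)c_i$. Everything else is a routine Taylor/Rouché estimate; the only additional point needing a line of justification is that choosing the root closest to the unit circle indeed selects the perturbation of $z$ rather than of some $u_i^{(\scc)}$ with $|u_i^{(\scc)}|>1$, which follows from the modulus separation noted at the outset.
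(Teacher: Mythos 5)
Your proof is correct and follows essentially the same route as the paper's: both evaluate the derivative of $p_{\scc}^{\ord}$ exactly at $u=z$ using the vanishing of $s_{i}^{\ord}$ to order $\ord+1-i$ at $w=1$ (so only $i=\ord$ survives, giving magnitude $(\ord+1)!\left|\jcc_{\ord}\right|\scc^{\ord}\geqslant(\ord+1)!\,B^{*}\scc^{\ord}$), bound the perturbation polynomial on $\left|u\right|\leqslant2$ by $3^{\ord+1}R^{*}\scc^{-1}$ via the binomial theorem, and conclude by Rouch\'e's theorem. Your write-up in fact supplies two details the paper leaves implicit: the leading-coefficient recursion $c_{i+1}=-(\ord+1-i)c_{i}$ showing $(s_{\ord}^{\ord})'(1)=(-1)^{\ord}(\ord+1)!$, and the justification (via the other roots converging to $zw_{i}$ with $w_{i}>1$) that the root closest to the unit circle really is the perturbation of $z$.
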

\begin{proof}
Using the fact that $w=1$ is a multiple root of $s_{i}^{\ord}$ for
$i<\ord$ and the decomposition \eqref{eq:p-decomposition}, we obtain
that
\[
\frac{\dd}{\dd u}p_{\scc}^{\ord}\left(u\right)\bigg|_{u=z}=\left(\ord+1\right)!\jcc_{\ord}\scc^{\ord},
\]
and therefore in \eqref{eq:first-der-big} we can take $C_{10}=\left(\ord+1\right)!$.
Thus, $C_{12}=2C_{10}=2\left(\ord+1\right)!$ . To estimate $C_{13}$,
we further have for $\left|t_{\phi}\right|<2$
\[
\left|e_{\scc}^{\ord}\left(t_{\phi}\right)\right|\leqslant R^{*}\scc^{-1}\underbrace{\sum_{j=0}^{\ord+1}{\ord+1 \choose j}2^{\ord+1-j}}_{=3^{\ord+1}},
\]
and thus $C_{13}=3^{\ord+1}$. Finally, $C_{9}=\frac{2C_{13}}{C_{12}}=\frac{2\cdot3^{\ord+1}}{2\left(\ord+1\right)!}$,
which proves \eqref{eq:c9-refined}.
\end{proof}
The formula \eqref{eq:poly-node-pert} turns out to be fairly tight,
and thus by comparing \eqref{eq:deltazj-prony} with \eqref{eq:c9-refined}
it can be said that \prettyref{alg:new-reconstruction-single-jump}
is away from best accuracy by a factor of
\[
\frac{\left(\frac{3}{2}\right)^{\ord+1}}{2\left(\ord+2\right)}.
\]

Similar calculations can be performed for the perturbations in the
magnitudes, but due to more complicated expressions we do not present
them here.

In order to obtain absolute error bounds for \prettyref{prob:best-abs-error}
(and for instance compare them with the constants in \prettyref{thm:full-accuracy-final}),
the above approach should be extended to handle neighborhoods of finite
size, as well as the overdetermined setting (i.e. the case $\left|S\right|>R$).
We consider this to be an important question for future investigation.

\subsection{Incorrect choice of the smoothness parameter}

An important feature of our method is that the parameters $\ord,\np$
are assumed to be known a-priori. Even in the case of one jump, an
oversetimation of the order $\ord$ leads to the overall deterioration
of the accuracy%
\footnote{In contrast, underestimation might lead to cancellation effects such
as the one described in \cite{batyomAlgFourier}.%
}. Let us briefly show this.

Assume that the function $\fun_{j}$ is only piecewise $\ordd$-smooth,
i.e. $\fun_{j}\in PC\left(\ordd,1\right)$, when $\ordd<\ord$, but
\prettyref{alg:new-reconstruction-single-jump} is applied with order
$\ord$.  The formula \eqref{eq:p-decomposition} would now read
\[
p_{\scc}^{\ord}\left(zw\right)=z^{\ord+2}\sum_{i=0}^{\ordd}\jcc_{i}\scc^{i}s_{i}^{\ord}\left(w\right).
\]
Consequently, in the perturbation analysis of \prettyref{lem:accuracy-root},
we would have that in a small $\varepsilon$-neighborhood of $z=\w^{\scc}$,
the polynomial $p_{\scc}^{\ord}$ is approximately of magnitude $O\left(\scc^{\ordd}\right)\varepsilon$.
On the other hand, the term $\epsilon_{k}$ in \eqref{eq:decimated-system-single-jump}
is of the order $O\left(k^{-\ordd-2}\right)$, and subsequently the
term $\delta_{k}$ in \eqref{eq:noisy-measurements} is of order $O\left(\scc^{\ord-\ordd-1}\right)$.
Therefore, the polynomial $e_{\scc}^{\ord}$ has coefficients of the
order $\left|\nn{\meas[j\scc]}-\meas[j\scc]\right|=O\left(\scc^{\ord-\ordd-1}\right)$.
Consequently, the size of the $\varepsilon$-neighborhood containing
the perturbed root of $q_{\scc}^{\ord}$ is in general not better
than $\varepsilon=O\left(\scc^{\ord-2\ordd-1}\right)$. To conclude,
in this case the jump point would be detected with accuracy $O\left(\scc^{\ord-2\ordd-2}\right)$
which is of course worse than $O\left(\scc^{-\ordd-2}\right)$ (the
best possible for piecewise $\ordd$-smooth case).

In the general setting of Prony systems (and in Eckhoff's problem
in particular), the problem of estimating the model parameters $\np,\left\{ \ell_{j}\right\} $
from the Fourier data appears to be challenging, especially in the
presence of closely spaced jumps and noise. Recent studies (such as
\cite{candes2012towards}) suggest that in any such setting, a crucial
role is played by the a-priori minimal node separation assumption.
On the other hand, the overall degree $\sum\ell_{j}$ of the Prony
system \eqref{eq:polynomial-prony} can be estimated via the numerical
rank of certain Hankel matrices constructed from the data $\left\{ \meas\right\} $
(see e.g. \cite{rao1992mbp} and references therein), and this information,
combined with the node separation assumption, might be used for the
correct ``clustering''. The basis of divided differences might also
play an important role in this problem, see \cite{byPronySing12,yom2009Singularities}.

\section{\label{sec:extensions}Possible extensions}
\begin{enumerate}
\item The Eckhoff's method has been extended in the literature to handle
expansions in other orthonormal basis, such as Chebyshev series (\cite{eckhoff1993,eckhoff1995arf})
and Fourier-Jacobi series (\cite{kvernadze2004ajd}). It should be
fairly straightforward to extend \prettyref{alg:full-algo} and the
analysis of \prettyref{sec:accuracy} to handle these cases.
\item Another immediate generalization is to the case of piecewise $C^{\infty}$
functions. By increasing the order $\ord$ of the reconstruction,
according to \prettyref{thm:full-accuracy-final} the resulting accuracy
will eventually be asymptotically smaller than any algebraic power
of $\sc$. This comes, however, at the cost of the constants of proportionality
growing with $\ord$.
\item This last remark brings us to another possible generalization, namely
to reconstruction of piecewise-\emph{analytic} functions. One natural
line of attack would be to analyze how the constants appearing in
the accuracy estimates depend on the smoothness order $\ord$ (as
in the special case provided by \prettyref{prop:c9-refinement}),
and then choose $\ord$ in an appropriate way so as to maximize the
resulting accuracy ($\ord$ would be depending on $\sc$ in this case).
According to the results of \cite{adcock2012stability}, one may expect
(at most) stable root-exponential convergence and unstable exponential
convergence. We plan to develop these ideas in a future work.
\item As noted by K.Eckhoff in \cite{eckhoff1995arf}, the methods can easily
be adjusted to handle discontinuities in higher derivatives (and not
in the function itself). We expect that decimation will provide the
best asymptotic convergence also in these cases.
\item Extension of the one-dimensional algebraic methods to higher dimensions
seems to be highly nontrivial, but nevertheless possible for some
special geometric configurations \cite{BatGolYom2011,eckhoff1998high}.
We consider it to be an important topic for future investigations.
\end{enumerate}
\bibliographystyle{plain}
\bibliography{../../../bibliography/all-bib}

\appendix

\section{\label{app:max-accuracy}Maximal accuracy for jumps}
\begin{proof}
[Proof of \prettyref{prop:maximal-accuracy}]Consider the following
subset of $PC\left(\ord+1,\np\right)$
\[
B\left(A,R\right)=\left\{ \fun\in PC\left(\ord+1,\np\right):\;\fun=\sing^{\left(\ord\right)}+\smooth;\;\left|\fc\left(\smooth\right)\right|<R\cdot k^{-\ord-2};\;\sum_{\ell,j}\left|\jc_{\ell,j}\right|<A\right\} 
\]
where the smooth part $\smooth$ is in $C^{\ord+1}$ and the quantities
$\left\{ \jc_{\ell.j}\right\} $ denote the associated jump magnitudes
of the piecewise polynomial $\sing^{\left(\ord\right)}$ of degree
$\ord$, as in \eqref{eq:sing-part-explicit-bernoulli}.

Let $g\in B\left(A,R\right)$ be an arbitrary fixed piecewise polynomial
$g=\sing^{\left(\ord\right)}$ with jumps $\left\{ \jp_{1},\dots,\jp_{\np}\right\} $
and associated jump magnitudes $\left\{ \jc_{\ell,j}\right\} $ .
We will show that there exists an absolute constant $C$ such that
for every index $\sc$ there exists a function $h_{\sc}\in B\left(A,R\right)$
whose first $\sc$ Fourier coefficients coincide with those of $g$,
while the corresponding jump locations differ by $C\sc^{-\ord-2}$.
Once we show this, it is clear that no deterministic algorithm will
be able to reconstruct the jump locations of all functions in $B\left(A,R\right)$
with accuracy essentially better than $O\left(\sc^{-\ord-2}\right)$.

Denote $\delta=C\sc^{-\ord-2}$ where $C$ is to be determined. Let
$\sing_{\sc}^{\left(\ord\right)}$ denote another piecewise polynomial
of degree $\ord$ with jumps
\[
\left\{ \eta_{1}=\jp_{1}+\delta,\dots,\eta_{\np}=\jp_{\np}+\delta\right\} 
\]
and the same jump magnitudes $\left\{ \jc_{\ell,j}\right\} $ as those
of $\sing^{\left(\ord\right)}$. Let
\[
b_{k}=\fc\left(\sing^{\left(\ord\right)}-\sing_{\sc}^{\left(\ord\right)}\right).
\]
Finally take
\[
h_{\sc}\left(x\right)\isdef\sing_{\sc}^{\left(\ord\right)}\left(x\right)+\sum_{\left|k\right|=0}^{\sc}b_{k}\ee^{\imath kx}.
\]
Clearly $\fc\left(g\right)=\fc\left(h_{\sc}\right)$ for $\left|k\right|=0,1,\dots,\sc$.
In order to ensure that $h_{\sc}\in B\left(A,R\right)$ we need to
choose $C$ small enough such that 
\[
\left|b_{k}\right|\leq R\cdot k^{-\ord-2};\qquad k=1,2,\dots,\sc.
\]
Let us show that $C\isdef\frac{2\pi R}{A}$ satisfies the above condition.
Indeed:
\begin{eqnarray*}
b_{k}=\fc\left(\sing^{\left(\ord\right)}-\sing_{\sc}^{\left(\ord\right)}\right) & = & \frac{1}{2\pi}\sum_{j=1}^{\np}\ee^{-\imath\jp_{j}k}\sum_{\ell=0}^{\ord}\frac{\jc_{\ell,j}}{\left(\imath k\right)^{\ell+1}}-\frac{1}{2\pi}\sum_{j=1}^{\np}\ee^{-\imath\eta_{j}k}\sum_{\ell=0}^{\ord}\frac{\jc_{\ell,j}}{\left(\imath k\right)^{\ell+1}}\\
 & = & \left(\ee^{\imath\delta k}-1\right)\frac{1}{2\pi}\sum_{j=1}^{\np}\ee^{-\imath\eta_{j}k}\sum_{\ell=0}^{\ord}\frac{\jc_{\ell,j}}{\left(\imath k\right)^{\ell+1}}.
\end{eqnarray*}
Now obviously
\[
\left|\frac{1}{2\pi}\sum_{j=1}^{\np}\ee^{-\imath\eta_{j}k}\sum_{\ell=0}^{\ord}\frac{\jc_{\ell,j}}{\left(\imath k\right)^{\ell+1}}\right|\leq\frac{A}{2\pi k}.
\]
From geometric considerations we have $\left|1-\ee^{\imath\delta k}\right|\leq\delta k$,
therefore
\begin{eqnarray*}
\left|b_{k}\right| & \leq & \frac{A}{2\pi k}\delta k=\frac{A}{2\pi}C\sc^{-\ord-2}=\frac{A}{2\pi}\cdot\frac{2\pi R}{A}\cdot\sc^{-\ord-2}<R\sc^{-\ord-2}.
\end{eqnarray*}
This completes the proof.
\end{proof}

\end{document}